\documentclass[a4paper,12pt,reqno]{amsart}

\topmargin=+3pt
\headsep=18pt
\oddsidemargin=-6pt
\evensidemargin=-6pt

\textwidth=467pt
\textheight=655pt

\usepackage[english]{babel}
\usepackage[latin1]{inputenc}
\usepackage[T1]{fontenc}
\usepackage{amsfonts}
\usepackage{amsmath}
\usepackage{amsthm}
\usepackage{amscd}
\usepackage{float}
\usepackage{enumerate}
\usepackage{multirow}
\usepackage{graphicx,wrapfig,lipsum}
\usepackage{latexsym}
\usepackage{amssymb}
\usepackage[all]{xy}
\usepackage{mathrsfs}
\usepackage{url}
\usepackage[lastpage,user]{zref}
\usepackage{array}
\usepackage{calc}

\theoremstyle{plain}
\newtheorem{teor}{Theorem}[section]
\newtheorem{cor}[teor]{Corollary}
\newtheorem{prop}[teor]{Proposition}
\newtheorem{lemma}[teor]{Lemma}
\newtheorem{defn}[teor]{Definition}

\theoremstyle{plain}

\theoremstyle{definition}
\newtheorem{oss}[teor]{Remark}

\newfloat{tabella}{H!}{tabella}
\floatname{tabella}{Tabella}
\newfloat{immagine}{H}{imm}
\floatname{immagine}{Figura}

\newcommand{\A}{\mathscr{A}}

\newcommand{\B}{\mathscr{B}}

\newcommand{\Csharp}{\mathscr{C}}
\newcommand{\C}{\mathbb{C}}

\newcommand{\E}{\mathscr{E}}
\newcommand{\F}{\mathscr{F}}

\newcommand{\I}{\mathscr{I}}
\newcommand{\sL}{\mathscr{L}}

\newcommand{\sO}{\mathscr{O}}

\renewcommand{\P}{\mathbb{P}}

\newcommand{\T}{\mathscr{T}}
\newcommand{\Z}{\mathbb{Z}}

\newcommand{\sV}{\mathscr{V}}

\newcommand{\href}[2]{#2}

\makeatletter
\newcommand*{\rightleftarrow}[2]{\mathrel{
  \settowidth{\@tempdima}{$\scriptstyle#1$}
  \settowidth{\@tempdimb}{$\scriptstyle#2$}
  \ifdim\@tempdimb>\@tempdima \@tempdima=\@tempdimb\fi
  \mathop{\vcenter{
    \offinterlineskip\ialign{\hbox to\dimexpr\@tempdima+1em{##}\cr
    \rightarrowfill\cr\noalign{\kern.5ex}
    \leftarrowfill\cr}}}\limits^{\!#1}_{\!#2}}}
\makeatother

\newcommand{\pullbackcorner}[1][dr]{\save*!/#1-1.2pc/#1:(-1,1)@^{|-}\restore}

\newcommand{\im}{\operatorname{Im}}

\DeclareMathOperator{\sfhom}{\mathscr{H}\!\mathit{om}}

\newcommand{\st}{\text{ s.t. }}

\newcommand{\coh}{{\operatorname{Coh}}}

\newcommand{\der}{{\textbf{D}}}

\newcommand{\eend}{{\operatorname{End}}}

\newcommand{\nota}[1]{{}}

\newcommand{\sing}{{\operatorname{Sing}}}
\newcommand{\rk}{{\operatorname{rk}}}

\makeatletter
\let\cite\relax
\DeclareRobustCommand{\cite}{%
  \let\new@cite@pre\@gobble
  \@ifnextchar[\new@cite{\@citex[]}}
\def\new@cite[#1]{\@ifnextchar[{\new@citea{#1}}{\@citex[#1]}}
\def\new@citea#1{\def\new@cite@pre{#1}\@citex}
\def\@cite#1#2{[{\new@cite@pre\space#1\if\relax\detokenize{#2}\relax\else, #2\fi}]}
\makeatother

\begin{document}
\title{The derived category of a non generic cubic fourfold containing a plane}
\author{RICCARDO MOSCHETTI}

\address{Department of Mathematics and Natural Sciences, University of Stavanger, NO-4036 Stavanger, Norway}
	\email{riccardo.moschetti@uis.no}

	\keywords{Derived categories, Azumaya algebras, Quadric fibrations, Degeneration loci}
	\subjclass[2010]{14M15, 18E30}

\begin{abstract}
We describe an Azumaya algebra on the resolution of singularities of the double cover of a plane ramified along a nodal sextic associated to a non generic cubic fourfold containing a plane. We show that the derived category of such a resolution, twisted by the Azumaya algebra, is equivalent to the Kuznetsov component in the semiorthogonal decomposition of the derived category of the cubic fourfold.
\end{abstract}
\maketitle

\section{Introduction}

A \textit{cubic fourfold} $Y$ is a smooth hypersurface of degree three in $\P^5$. The derived category of $Y$ has the following semiorthogonal decomposition
\vspace{-1 mm}
\begin{equation*}
\der^b(Y) = \langle \T_Y, \sO_Y, \sO_Y(1), \sO_Y(2) \rangle
\end{equation*}
\noindent
where $\sO_Y(i)$ is an exceptional object for $i=0,1,2$ and $\T_Y$, the Kuznetsov component, is the admissible subcategory of $\der^b(Y)$ right orthogonal to $\langle \sO_Y, \sO_Y(1), \sO_Y(2) \rangle$. Kuznetsov proved that $\T_Y$ has many properties in common with the derived category of a K3 surface, and this is very related with the problem of rationality of cubic fourfolds, as described in \cite{RATPROB}. Hassett, in \cite{HASSETT}, studied the divisors $\Csharp_d$ of the moduli space of all cubic fourfolds that parametrize cubic fourfolds containing a surface non homologous to a complete intersection, also called \textit{special cubic fourfolds}. The integer $d$ is the discriminant of the saturated sublattice of $H^4_{prim}(Y;\Z)$ spanned by the square of the hyperplane class and by the class of the surface which is not homologous to a complete intersection. Huybrechts in \cite{HUYK3} studied deeper the relations between properties of the component $\T_Y$ and the divisors $\Csharp_d$. 

It is known that $\Csharp_8$ parametrizes cubic fourfolds containing a plane. Let $Y$ be a generic cubic fourfold in $\Csharp_8$ and let $A$ be the plane contained in $Y$. Consider the projection map to another plane $B$ in $\P^5$, disjoint from $A$. The preimage along this map of a point $b \in B$, obtained by intersecting $Y$ with a $\P^3$ spanned by $A$ and $b$, consists of the plane $A$ and, since the degree of $Y$ is three, a quadric surface. By blowing up the plane $A$ one obtains a quadric fibration $\pi: Y^+ \rightarrow B$.

The locus in $B$ which parametrizes the singular fibers is a sextic curve $C$, that turns out to be smooth if the cubic fourfold $Y$ is generic in $\Csharp_8$. In the following, $C$ will be called the sextic associated to the pair $(Y,A)$; notice however that a very general $Y$ in $\Csharp_8$ contains a unique plane, by \cite{VOISIN}. Since $Y$ is smooth, Proposition $1.2$ of \cite{BPrym} applied to the quadric fibration $\pi: Y^+ \rightarrow B$ ensures that this sextic exists and has at most ordinary double points. One can also relate a K3 surface $S$ to the cubic fourfold $Y$ by taking the double covering of the plane $B$ ramified along $C$.

From the homological point of view, Kuznetsov proved the following

\begin{teor}[{\cite[Theorem 4.3]{KUZNETSOV}}] \label{thm:kuznets}
Let $Y$ be a generic cubic fourfold in $\Csharp_8$. Then there exists an exact equivalence of triangulated categories $\T_Y \cong \der^b(S,\tilde{\B_0})$.
\end{teor}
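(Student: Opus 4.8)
The plan is to realise $\T_Y$ and the twisted category $\der^b(S,\tilde{\B_0})$ as the two \emph{residual} pieces of a single derived category, namely that of the blow-up $Y^+ = \bl_A Y$ carrying the quadric fibration $\pi\colon Y^+ \to B$, equipped with two different semiorthogonal decompositions. Comparing these two decompositions and matching their exceptional parts by mutation will force the non-exceptional pieces to agree.

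First I would produce two semiorthogonal decompositions of $\der^b(Y^+)$. Applying Orlov's blow-up formula to the blow-down $\sigma\colon Y^+ \to Y$, and using that $A\cong\P^2$ is a smooth centre of codimension two, gives
\[
\der^b(Y^+) = \langle \der^b(A),\ \sigma^*\der^b(Y)\rangle;
\]
expanding $\der^b(Y) = \langle \T_Y, \sO_Y, \sO_Y(1), \sO_Y(2)\rangle$ and Beilinson's collection $\langle \sO_A, \sO_A(1), \sO_A(2)\rangle$ on $A$ then exhibits $\der^b(Y^+)$ as $\T_Y$ together with six exceptional objects. On the other hand, Kuznetsov's description of the derived category of a quadric fibration, applied to the fibration $\pi$ into two-dimensional quadrics, gives
\[
\der^b(Y^+) = \langle \der^b(B,\B_0),\ \pi^*\der^b(B),\ \pi^*\der^b(B)\otimes\sO_{Y^+/B}(1)\rangle,
\]
where $\B_0$ is the sheaf of even parts of the Clifford algebra attached to $\pi$; since $B\cong\P^2$, this likewise exhibits $\der^b(Y^+)$ as the admissible subcategory $\der^b(B,\B_0)$ together with six exceptional objects.

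The heart of the argument, and the step I expect to be the main obstacle, is to reconcile the two decompositions. Both write $\der^b(Y^+)$ as one non-trivial admissible subcategory ($\T_Y$ or $\der^b(B,\B_0)$) plus an exceptional block of length six, but the two exceptional blocks do not coincide on the nose. I would compute the $\hom$ and $\operatorname{Ext}$ groups among the pulled-back bundles $\sigma^*\sO_Y(i)$, the exceptional-divisor twists coming from $A$, and the relative twists $\sO_{Y^+/B}(j)$, and then perform a sequence of mutations carrying one exceptional block onto the other so that they generate the same admissible subcategory of $\der^b(Y^+)$. Since mutation through an exceptional collection is an equivalence, the two complementary subcategories must then be equivalent, yielding $\T_Y \cong \der^b(B,\B_0)$.

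Finally I would pass from the plane $B$ to the K3 surface. Because $Y$ is generic in $\Csharp_8$ the sextic $C$ is smooth, so the double cover $f\colon S \to B$ branched along $C$ is a smooth K3 surface, and the center of $\B_0$ is identified with $f_*\sO_S$. Smoothness of $C$ ensures that $\B_0$ becomes the pushforward of a sheaf of Azumaya algebras $\tilde{\B_0}$ on $S$, locally free of the expected rank with no degeneration, so that the category of $\B_0$-modules on $B$ is equivalent to the category of $\tilde{\B_0}$-modules on $S$. This gives $\der^b(B,\B_0) \cong \der^b(S,\tilde{\B_0})$, and composing with the previous step proves $\T_Y \cong \der^b(S,\tilde{\B_0})$.
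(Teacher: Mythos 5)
Your proposal is correct and follows essentially the same route as the proof this paper relies on: the statement is Kuznetsov's, and the paper factors it exactly as you do, first into the equivalence $\T_Y \cong \der^b(B,\B_0)$ obtained by playing the blow-up decomposition of $\der^b(Y^+)$ against the quadric-fibration decomposition and mutating (invoked here as Proposition \ref{prop:stepone}), and then into the equivalence $\der^b(B,\B_0)\cong\der^b(S,\tilde{\B_0})$ coming from the sheaf of Azumaya algebras on the smooth double cover, which applies because genericity of $Y$ makes the sextic smooth and the form simply degenerate (Proposition \ref{Prop:AzumayaAlgebraEvenFibration}). The mutation step you single out is indeed the technical core of Kuznetsov's argument, but your outline of it and of the final descent to $S$ matches the actual proof.
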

\noindent
Here, $\der^b(S,\tilde{\B_0})$ stands for the derived category of coherent sheaves on $S$ twisted by $\tilde{\B_0}$, a sheaf of Azumaya algebras with the property that the pushforward under the map of the double covering is isomorphic to the sheaf of even Clifford algebras $\B_0$ associated with $\pi$. 
It is interesting to see how it is possible to attach a Brauer class on a smooth K3 surface to $\B_0$ when the sextic associated to the cubic fourfold and the plane is nodal. It means that some fibres of $\pi$ are the union of two different planes and the related quadratic form is no longer simply degenerate. These fibres correspond exactly to the nodes of the sextic. In this case the double covering $S$ is singular, and one needs to resolve the singularities in order to obtain a K3 surface $S^+$. The Clifford algebra $\B_0$ can still be seen as the pushforward of a certain Azumaya algebra $\tilde{\B_0}$ on $S$ but only on the complement of the singular locus of the sextic, and this is not sufficient to prove Theorem \ref{thm:kuznets}.

It is particularly intriguing to look for a way to make the usual theory work in this singular case as well. More precisely, is it possible to define an Azumaya algebra $\A$ on the K3 surface $S^+$ such that the push forward of $\A$ to the plane $B$ is isomorphic to the Clifford algebra $\B_0$? Proposition \ref{prop:final} answers this question positively and allows us to prove the following a generalization of Theorem \ref{thm:kuznets}. 

\begin{teor} \label{teor:extension}
Let $Y$ be a cubic fourfold containing a plane such that the associated sextic curve $C$ on the plane $B$ is nodal. Then there exists an exact equivalence of triangulated categories  $\T_Y \cong \der^b(S^+,\A)$, where $S^+$ is the K3 surface obtained by resolving the singularities of the double covering of $B$ ramified along $C$ and $\A$ is an Azumaya algebra on $S^+$ such that the pushforward to the plane $B$ is isomorphic to the Clifford algebra $\B_0$.
\end{teor}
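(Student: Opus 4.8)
The plan is to follow Kuznetsov's proof of Theorem \ref{thm:kuznets} step by step, isolate the single place where the smoothness of the sextic $C$ is used, and replace that step with the construction of $\A$ recorded in Proposition \ref{prop:final}. First I would set up the two semiorthogonal decompositions of the blown-up fourfold $Y^+$. On one side, since $Y$ is smooth the fibration $\pi: Y^+ \to B$ is a flat quadric surface fibration with smooth total space, so Kuznetsov's decomposition for quadric fibrations applies and yields $\der^b(Y^+) = \langle \der^b(B,\B_0), \der^b(B), \der^b(B) \rangle$; this uses only smoothness of $Y^+$ and flatness of $\pi$, not the smoothness of $C$. On the other side, the Orlov blow-up formula gives $\der^b(Y^+) = \langle \der^b(A), \der^b(Y) \rangle$, into which I substitute $\der^b(Y) = \langle \T_Y, \sO_Y, \sO_Y(1), \sO_Y(2)\rangle$. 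Comparing the two decompositions and performing the same sequence of mutations as in the proof of Theorem \ref{thm:kuznets} identifies the Kuznetsov component with the twisted derived category of the base, namely $\T_Y \cong \der^b(B, \B_0)$. No step of this comparison uses that $C$ is smooth.

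It therefore remains to establish the equivalence $\der^b(B, \B_0) \cong \der^b(S^+, \A)$. The centre of $\B_0$ is the structure sheaf of the double cover $f: S \to B$, so $\B_0$ is naturally a sheaf of $\sO_S$-algebras, and because $f$ is finite, hence affine, pushforward along $f$ is an equivalence $\der^b(S, \B_0) \cong \der^b(B, \B_0)$; this holds at the level of abelian categories regardless of whether $S$ is singular. When $C$ is smooth, $S$ is already a K3 surface and $\B_0$ is Azumaya on it, recovering Theorem \ref{thm:kuznets}. When $C$ is nodal, $S$ acquires an ordinary double point over each node of $C$ and $\B_0$ fails to be Azumaya exactly at these points; the content of Proposition \ref{prop:final} is to transfer $\B_0$ across the minimal resolution $\rho: S^+ \to S$ to an honest Azumaya algebra $\A$ on the K3 surface $S^+$, with $\rho_*\A \cong \B_0$ as $\sO_S$-algebras and $g_*\A \cong \B_0$ for $g = f \circ \rho$.

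The main obstacle is to promote the algebra isomorphism $\rho_*\A \cong \B_0$ to a derived equivalence $\der^b(S^+, \A) \cong \der^b(S, \B_0)$. Since $\rho$ is birational but not finite, $R\rho_*$ on the underlying surfaces is not an equivalence, so the Azumaya structure of $\A$ must enter in an essential way. The argument is local around each node: the resolution replaces an $A_1$ singularity by a $(-2)$-curve $E \cong \P^1$, and since $\br(\P^1) = 0$ the restriction $\A|_E$ is a matrix algebra $\eend(\sV)$ for a suitable bundle $\sV$ on $E$. The plan is to use the natural adjoint pair relating $\A$-modules and $\B_0$-modules induced by $\rho$, built from $R\rho_*$, from $L\rho^*$, and from the canonical map $\rho^*\rho_*\A \to \A$, and to verify that its unit and counit are isomorphisms. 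Away from the nodes $\rho$ is an isomorphism carrying $\A$ to $\B_0$, so this reduces to the single local model of an Azumaya algebra on the minimal resolution of an $A_1$ point; there the splitting $\A|_E \cong \eend(\sV)$ makes the higher direct images $R^{>0}\rho_*$ vanish and exhibits $\A$ as a noncommutative crepant resolution of the non-maximal order $\B_0$, which yields the local equivalence.

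Granting this local verification, the three equivalences compose to give $\T_Y \cong \der^b(B,\B_0) \cong \der^b(S,\B_0) \cong \der^b(S^+,\A)$, which is the assertion of Theorem \ref{teor:extension}. I expect the first paragraph to be essentially formal, a faithful transcription of Kuznetsov's mutation argument once one checks that his hypotheses do not require smoothness of $C$; the technical heart, and the only genuinely new input beyond the construction of $\A$, is the local analysis at the $A_1$ points carried out in the last paragraph.
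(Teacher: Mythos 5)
Your first two reductions are sound, and they already diverge from the paper's route: the identification $\T_Y \cong \der^b(B,\B_0)$ (Proposition \ref{prop:stepone}) indeed needs no smoothness of $C$, and the affine pushforward equivalence $\der^b(S,\tilde\B_0)\cong\der^b(B,\B_0)$ is exactly Proposition \ref{Prop:AzumayaAlgebraEvenFibration}, which as stated requires only generic regularity and even rank. The paper, by contrast, never works on $S$ at all: it embeds $Y$ into a cubic fivefold $\bar Y$, applies Kuznetsov's three-dimensional theorem from \cite{K2} to the \emph{small} resolution $X^+$ of the double cover of $\P^3$, and descends to the hyperplane section by exact cartesian base change (Proposition \ref{prop:pullbackazumvar}, Lemma \ref{Lemma:ExactCartesian}, Corollary \ref{cor:pullbackb0}). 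So if your third step worked, you would have a genuinely more intrinsic, two-dimensional proof. But that step, as written, has a real gap.

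The gap is that the vanishing of $\br(\P^1)$ tells you $\A|_E \cong \eend(\sV)$ but says nothing about the splitting type of $\sV$, and the conclusion you want is \emph{false} for some splitting types, so no argument that uses only $\br(\P^1)=0$ can succeed. Concretely, if $\sV \cong \sO_E \oplus \sO_E$, then near $E$ one has $\A \cong M_2(\sO_{S^+})$ and $\rho_*\A \cong M_2(\sO_S)$, so after Morita equivalence your adjoint pair becomes the ordinary $(L\rho^*, R\rho_*)$ between $\der^b(S^+)$ and $\der^b(S)$; this is not an equivalence (it kills $\sO_E(-1)$, since $H^*(\P^1,\sO(-1))=0$), and indeed $\der^b(S^+)\not\cong\der^b(S)$ as one side is smooth and the other is not. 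The statement you need is that $\A|_E$ has the \emph{odd} splitting type $\eend(\sO_E\oplus\sO_E(-1))$, in which case $\rho_*\A$ is, locally at the node, Van den Bergh's noncommutative resolution $\eend_{\sO_S}(\sO_S\oplus\sM)$ and the pushforward is an equivalence by a relative tilting argument --- and even then the tilting conditions ($R^{1}\rho_*$-vanishing and generation) must be checked on a formal neighbourhood of $E$, not just on $E$ itself. Identifying the odd type is a statement about how $\A$ was built, i.e.\ about Kuznetsov's $\P^1$-bundle $M^+\to X^+$ restricted over the exceptional curve, so you are forced back into the same input from \cite{K2} that the paper uses. Two further points need proof in your scheme: that $\rho_*\A \cong \tilde\B_0$ as $\sO_S$-algebras on all of $S$ (Proposition \ref{prop:final} only gives the composite pushforward to $B$, and the paper's remark after it identifies the two algebras only away from the nodes), and the appeal to ``noncommutative crepant resolutions'' cannot replace the tilting computation: derived equivalence of noncommutative resolutions of a fixed singularity is a conjecture, not a theorem one may quote. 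So the heart of your third step is currently an assertion, and filling it in amounts to redoing, in local form, the analysis the paper imports from Kuznetsov.
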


A geometrical meaning to the Kuznetsov component $\T_Y$ has already been established in \cite{T} for cubic forufolds containing a plane and in \cite{BLMS} for all cubic fourfolds.

\textbf{The plan of the paper.} Some preliminaries concerning quadric fibrations and Azumaya algebras are given in Section \ref{Sec:prel}. The geometric context in which the construction takes place is described in Section \ref{Sec:geometriccontext}. Section \ref{Sec:nodalsextic} is devoted to finding the Azumaya algebra $\A$, which is done in Proposition \ref{prop:final} and to giving the proof of Theorem \ref{teor:extension}. In all the paper we will work over the field of the complex numbers.

\section{Preliminaries} \label{Sec:prel}
\subsection{Quadric fibrations and line bundle valued quadratic forms}
Here is a collection of results concerning quadric fibrations of low dimension. A good tool to deal with such objects is provided by quadratic forms with values in a line bundle. One can associate to a line bundle valued quadratic form a particular quadric fibration in such a way that some geometric properties of the fibration reflect on the quadratic form and vice versa. 

\begin{defn} \label{Defn:LBVQuadraticForm}
Let $B$ be a scheme. A line bundle valued quadratic form on $B$, often simply called a quadratic form, is a triple $(\E,q,\sL)$ where $\E$ is a vector bundle on $B$, $\sL$ is a line bundle on $B$ and $q: \E \rightarrow \sL$ is a morphism of sheaves such that
\begin{itemize}
	\item $q(av)=a^2q(v)$ where $a$ is a section of $\sO_B$ and $v$ is a section of $\E$
	\item the morphism $b_q:\E \times \E \rightarrow \sL$, defined for every $v$ and $w$ sections of $\E$ by 
	$$b_q(v,w)=q(v+w)-q(v)-q(w)$$
	 is $\sO_B$-bilinear.
\end{itemize}
The dimension of the quadratic form $(\E,q,\sL)$ is the rank of $\E$. A quadratic form $(\E,q,\sL)$ is called regular if the morphism from $\E$ to $\sfhom(\E,\sL)$ induced by $b_q$ is an isomorphism. Let $B$ now be a noetherian separated integral scheme; a quadratic form $(\E,q,\sL)$ is called generically regular if the form is regular over the generic point of $B$.
\end{defn}

We will call quadric fibration a morphism $p:X \rightarrow S$ such that the fiber $X_s$ over a point $s$ is a quadric. Starting with a quadratic form $(\E,q,\sL)$, one can consider the projection $\pi:\P_S(\E) \rightarrow S$, where $\P_S(\E)$ is the projectivization of $\E$ on $S$.  By denoting $X \subset \P_S(E)$ as the zero locus of $\sigma$, it is straightforward to prove that the restriction of the projection $\pi$ to $X$ is a quadric fibration. In particular, $X$ is called the quadric fibration associated to $(\E,q,\sL)$. From now on we require all quadric fibrations to be flat, in the same spirit we will use only quadratic form with a flat associated quadric fibration.  Auel, Bernardara and Bolognesi in \cite[Lemma 1.1.1]{ABB} proved the equivalence between Definition \ref{Defn:LBVQuadraticForm} and other definitions of line bundle valued quadratic forms. For instance, to give a quadratic form $(\E,q,\sL)$ is equivalent to give a global section of $\sfhom(\sL^*,S^2 \E^*)$ on $B$, which corresponds to a morphism of $\sO_B$ modules $\sigma:\sL^* \rightarrow S^2 \E^*$. This is useful in order to state the following

\begin{defn}[{\cite[Section 3.5]{QUADRIC}}] \label{defn:degenerationlocus}
Let $(\E,q,\sL)$ be a generically regular quadratic form of dimension $n$ on a scheme $B$. The $d$-th degeneration locus of the quadratic form, denoted by $B_d \subset B$, is a closed subscheme defined by the following sheaf of ideals
$$\I_d=\im(\Lambda^{n+1-d}\E\otimes\Lambda^{n+1-d}\E\otimes (\sL^*)^{n+1-d} \xrightarrow{\Lambda^{n+1-d}\sigma} \sO_B).$$
\end{defn}

\noindent Notice that $B_{i+1}$ is contained in $B_i$ for all positive integers $i$, and that $B_1$ is a divisor on $B$ that is called the discriminant divisor. Geometrically, the $B_1$ parametrizes the singular fibres of the quadric fibration $(\E,q,\sL)$.
The notion of simple degeneration will be useful to describe the situation arising from a generic cubic fourfold containing a plane. The exact definition in terms of quadratic forms can be found in \cite[Section 1.1]{ABB}, whereas a property of simply degenerate quadratic forms will be taken as a definition in this paper: a quadratic form is simply degenerate if and only if its second degeneracy locus $B_2$ is empty. In general $\sing(B_i) \supseteq B_{i+1}$, so $B_2$ being empty does not imply $B_1$ to be smooth, as can be seen for instance in \cite{APS} Proposition 1.5.

\subsection{Clifford algebra and Azumaya algebra}
A good introduction to Clifford algebras is provided by \cite{KNUS}, where Clifford algebras are defined in the context of modules over commutative rings. In the same spirit, one can associate a sheaf of $\Z$-graded algebras to each line bundle valued quadratic form by following a construction proposed in \cite{BK}. The subalgebra of degree zero, denoted by $\B_0$, has a structure of a sheaf of algebras, and is called the even (part of the) Clifford algebra, see \cite{A}, Section 1.8. These spaces play an important geometric role, described for example in \cite{KUZNETSOV}. The construction proposed in \cite{ABB} makes possible to define directly the even Clifford algebra. Starting from a quadratic form on a scheme $B$, not necessarily regular, consider the tensor algebra $T(\E \otimes \E \otimes \sL^*)$, and define the ideals
\begin{align*}
J_1 &= (v \otimes v \otimes f - f(q(v)))\\
J_2 &=(u \otimes v \otimes f \otimes v \otimes w \otimes g - f(q(v))u\otimes w \otimes g)
\end{align*} 
where $u,v,w$ are sections of $\E$ and $f,g$ are sections of $\sL^*$. The even Clifford algebra of the quadratic form $(\E,q,\sL)$ is defined by the quotient
$$\B_0(\E,q,\sL) := T(\E \otimes \E \otimes \sL^*)/(J_1 + J_2)$$

This is not the only way to construct the even Clifford algebra of a quadratic form; in \cite{QUADRIC}, Kuznetsov defines the sheaf of Clifford algebras, and the description of $\B_0$ and $\B_1$ in terms of the initial quadratic form turns out to be
$$\B_0 \cong \sO_B \oplus (\wedge^2 \E \otimes \sL^*) \oplus (\wedge^4 \E \otimes (\sL^*)^2) \oplus \cdots$$
$$\B_1 \cong \E \oplus (\wedge^3 \E \otimes \sL^*) \oplus (\wedge^5 \E \otimes (\sL^*)^2) \oplus \cdots$$
as $\sO_B$-modules.
Other ways to define the even Clifford algebra are the splitting construction and the gluing construction; see \cite[Appendix A]{ABB}, for further references. These ways turn out to be all equivalent, since the Clifford algebra of a quadratic form over a ring is unique. The splitting construction can be used to describe the even Clifford algebra $\B_0$ as a functor. 

\begin{defn} \label{Defn:AzumayaAlgebra}
An algebra $\A$ over a local commutative ring $R$ is said to be Azumaya if $\A$ is a free $R$-algebra of finite rank and such that the map 
\begin{eqnarray*}
\A \otimes_R \A^{op} &\rightarrow& \eend_R(\A)\\
a \otimes b &\mapsto& (x \mapsto axb)
\end{eqnarray*}
is an isomorphism.
If $X$ is a scheme, an $\sO_X$-algebra $\A$ is Azumaya if it is coherent as an $\sO_X$-module and, for every closed point $x$ of $X$, $\A_x$ is Azumaya over $\sO_{X,x}$.
\end{defn}

The even Clifford algebra of a generically regular quadratic form with even dimension can be described in terms of an Azumaya algebra. 

\begin{prop}[{\cite[Proposition 3.13]{QUADRIC}}] \label{Prop:AzumayaAlgebraEvenFibration}
Let $(\E,q,\sL)$ be a generically regular quadratic form on $B$ of even dimension, and let $f:S \rightarrow B$ be the double cover $S$ of $B$ ramified at the discriminant locus $B_1$, determined by the centre of the Clifford algebra $\B_0$. Then there exists a sheaf of algebras $\tilde{\B}_0$ on $S$ such that $f_*(\tilde{\B}_0) \cong \B_0:=\B_0(\E,q,\sL)$ and the functor
$$f_*:\coh(S,\tilde{\B}_0) \xrightarrow{\sim} \coh(B,\B_0)$$ 
is an equivalence of categories. Moreover, the restriction of $\tilde{\B}_0$ to the complement of $f^{-1}(B_2) \subset S$ is a sheaf of Azumaya algebras on $S$.
\end{prop}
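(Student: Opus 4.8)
The plan is to realise $\B_0$ as a module over its own centre and then transport everything to $S$ via the equivalence furnished by the affine morphism $f$. First I would analyse the centre $\mathcal{Z}$ of the sheaf of algebras $\B_0$. For a generically regular quadratic form of even dimension $n=2m$, the structure theory of Clifford algebras over commutative rings (as in \cite{KNUS}) shows that $\mathcal{Z}$ is a sheaf of commutative $\sO_B$-algebras, locally free of rank two, whose discriminant as a rank-two algebra coincides with the discriminant divisor $B_1$ of the quadratic form. Taking the relative spectrum $f\colon S=\underline{\spec}_B(\mathcal{Z})\to B$ then produces a finite flat morphism of degree two, ramified exactly along $B_1$; this identifies $S$ with the double cover of $B$ ramified at $B_1$ and determined by the centre of $\B_0$, as in the statement.

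Since $f$ is finite, hence affine, pushforward along $f$ induces an equivalence between $\qcoh(S)$ and the category of quasi-coherent $\mathcal{Z}=f_*\sO_S$-modules on $B$, with the standard quasi-inverse $\Fdg\mapsto f^*\Fdg$ recording the $\mathcal{Z}$-action. As $\B_0$ is a $\mathcal{Z}$-algebra — its centre $\mathcal{Z}$ acts on it — this equivalence transports $\B_0$ to a sheaf of $\sO_S$-algebras $\tilde{\B}_0$ with $f_*\tilde{\B}_0\cong\B_0$. The same equivalence, now recording the algebra action as well, restricts to an equivalence between left $\B_0$-modules on $B$ and left $\tilde{\B}_0$-modules on $S$: a $\B_0$-module is in particular a $\mathcal{Z}$-module, and the descent is compatible with the multiplication $\B_0=f_*\tilde{\B}_0$. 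Passing to coherent objects yields $f_*\colon\coh(S,\tilde{\B}_0)\xrightarrow{\sim}\coh(B,\B_0)$, the asserted equivalence.

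It remains to prove that $\tilde{\B}_0$ is Azumaya on the open set $U=S\setminus f^{-1}(B_2)$. By Definition \ref{Defn:AzumayaAlgebra} this is local: at each closed point $s\in U$ I would verify that the canonical map $\tilde{\B}_{0,s}\otimes_{\sO_{S,s}}\tilde{\B}_{0,s}^{\op}\to\eend_{\sO_{S,s}}(\tilde{\B}_{0,s})$ is an isomorphism by reducing the quadratic form to a local normal form. Over $B\setminus B_1$ the form is regular and $f$ is \'etale, so the classical fact that the even Clifford algebra of a regular even-dimensional form is Azumaya over its centre applies verbatim. The delicate case is over the points of $B_1\setminus B_2$, where $f$ ramifies and the quadric fibre is simply degenerate of corank one.

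Here one must exploit the hypothesis $B_2=\emptyset$. Diagonalising the simply degenerate form locally as a regular part of rank $2m-1$ together with a single degenerate direction cut out by the equation $t$ of $B_1$, I would compute the even Clifford algebra explicitly and show that, after base change to the ramified cover $S$ — where $\mathcal{Z}=\sO_B[\sqrt{t\,\cdot\,\text{unit}}]$ adjoins the missing square root — the rank-two centre collapses precisely so as to make $\tilde{\B}_0$ locally a matrix algebra over $\sO_{S,s}$, equivalently so as to make the Azumaya map above an isomorphism. The crux, which I expect to be the main obstacle, is exactly this local computation at a ramification point: a corank-one node in the discriminant is absorbed by the ramification of the double cover, whereas a corank-two degeneration (a point of $B_2$, where two planes meet) leaves a genuine non-Azumaya singularity in the algebra. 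This is why the Azumaya property can be claimed only on the complement of $f^{-1}(B_2)$, and the splitting construction of \cite{ABB} could be used to organise the normal-form bookkeeping here.
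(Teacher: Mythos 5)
The paper does not actually prove this proposition: it is imported verbatim from \cite[Proposition 3.13]{QUADRIC} and used as a black box, so the only meaningful comparison is with Kuznetsov's original argument. Your proposal reconstructs that argument essentially step for step --- identify the centre $\mathcal{Z}$ of $\B_0$ as a commutative $\sO_B$-algebra of rank two with discriminant $B_1$, realise $S$ as the relative spectrum $\underline{\spec}_B(\mathcal{Z})$, use affineness of $f$ to identify $\tilde{\B}_0$-modules on $S$ with $\B_0$-modules on $B$, and verify the Azumaya property locally, separating the \'etale locus from the simply degenerate locus --- and this is the correct and standard route. The one caveat is that the decisive step, the local normal-form computation at a point of $B_1\smallsetminus B_2$ showing that the ramification of the double cover absorbs the corank-one degeneration and renders $\tilde{\B}_{0}$ Azumaya there, is announced in your last paragraph rather than carried out; that computation is the actual content of Kuznetsov's proof, so to make your write-up self-contained you would need to perform it (or cite it), whereas everything before it is routine.
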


Some recent results involving cubic fourfolds containing a plane that make use of the tool of quadric fibrations can be found in \cite{ABBV}, \cite{AFIB}, \cite{G}, \cite{LMS}, \cite{MS}, \cite{GO} and \cite{T}. 

\subsection{Noncommutative varieties and twisted derived categories}
Let $X$ be an algebraic variety and consider a sheaf of $\sO_X$-algebras $\B$ of finite rank as a $\sO_X$-module. The pair $(X,\B)$ is called a noncommutative variety.
A morphism $f:=(f_0, f_{\operatorname{alg}})$ between $(X_1,\B_1)$ and $(X_2,\B_2)$ is given by a morphism of algebraic varieties $f_0:X_1 \rightarrow X_2$ and a morphism of $\sO_{X_1}$-algebras $f_{\operatorname{alg}}:f_0^*\B_2 \rightarrow \B_1$. The morphism $f$ is called strict if $\B_1 \cong f_0^*\B_2$, and is called an extension if $X_1$ is isomorphic to $X_2$, and $f_0$ is the identity. Since the pullback of an Azumaya algebra is still Azumaya, see \cite{KNUS} III.5.1, one can define also Azumaya varieties as noncommutative varieties $(X,\B)$ in which the algebra $\B$ is Azumaya. 
Given a noncommutative variety $(X,\B)$, the so-called twisted derived category $\der^b(X,\B)$ can be defined in the usual way starting with the category of coherent sheaves of right $\B$-modules on $X$ as objects. Some background on noncommutative varieties can be found in \cite{QUADRIC}, and an introduction to Azumaya varieties together with all the definitions of functors involving twisted derived categories can be found in \cite{HYP}.

\begin{prop} \label{prop:pullbackazumvar}
Let $(X,\B_X) \xrightarrow{f} (S,\B_{S})$, and $(Y,\B_Y) \xrightarrow{g} (S,\B_{S})$ be morphisms of noncommutative varieties. Consider the following base change diagram
\begin{equation} \label{dgr:basechange}
	\xymatrix{
 X \times_S Y\ar[d]^{q_0}\ar[r]^-{p_0}\pullbackcorner & X\ar[d]^{f_0} \\
 Y\ar[r]^{g_0} & S
}
\end{equation}
If $f$ is strict, then $(X \times_S Y,q_0^*(\B_Y))$ is a fibre product of $(X,\B_X)$ and $(Y,\B_Y)$ over $(S,\B_{S})$.
If $g$ is strict, then $(X \times_S Y,p_0^*(\B_X))$ is a fibre product of $(X,\B_X)$ and $(Y,\B_Y)$ over $(S,\B_{S})$.
Here the fibre product of noncommutative varieties is defined as in \cite{HYP}, Lemma 10.37.
\end{prop}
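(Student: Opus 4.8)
The plan is to verify directly the universal property that characterises the fibre product in the category of noncommutative varieties (equivalently, to match the construction of \cite{HYP}, Lemma 10.37). First I recall the composition rule: for morphisms $(a_0,a_{\operatorname{alg}})\colon (X_1,\B_X)\to(X_2,\B_Y)$ and $(b_0,b_{\operatorname{alg}})\colon (X_2,\B_Y)\to(X_3,\B_S)$, the composite has variety part $b_0\circ a_0$ and algebra part $a_{\operatorname{alg}}\circ a_0^*(b_{\operatorname{alg}})\colon (b_0 a_0)^*\B_S\to\B_X$, since algebra maps pull back contravariantly. I treat the case in which $f$ is strict, so $f_{\operatorname{alg}}\colon f_0^*\B_S\to\B_X$ is an isomorphism; the case of $g$ strict is symmetric. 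Write $Z=X\times_S Y$ and $\B_Z=q_0^*\B_Y$. I define the projection $Q=(q_0,\id)$, which is strict, and the projection $P=(p_0,p_{\operatorname{alg}})$ by letting $p_{\operatorname{alg}}$ be the composite
\[
p_0^*\B_X \xrightarrow{\;p_0^*(f_{\operatorname{alg}}^{-1})\;} p_0^*f_0^*\B_S \;\cong\; q_0^*g_0^*\B_S \xrightarrow{\;q_0^*(g_{\operatorname{alg}})\;} q_0^*\B_Y=\B_Z,
\]
where the middle isomorphism is the canonical comparison coming from $f_0\circ p_0=g_0\circ q_0$. Here strictness of $f$ is used to make sense of $p_0^*(f_{\operatorname{alg}}^{-1})$.

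Next I check that the square commutes in the noncommutative sense and satisfies the universal property. Commutativity on varieties is the identity $f_0 p_0=g_0 q_0$; on algebras, a short computation with the composition rule shows that both $(f\circ P)_{\operatorname{alg}}$ and $(g\circ Q)_{\operatorname{alg}}$ equal $q_0^*(g_{\operatorname{alg}})$ after the canonical identification, the cancellation $p_0^*(f_{\operatorname{alg}}^{-1})\circ p_0^*(f_{\operatorname{alg}})=\id$ being exactly the point. For the universal property, given $(W,\B_W)$ with morphisms $U=(u_0,u_{\operatorname{alg}})$ to $(X,\B_X)$ and $V=(v_0,v_{\operatorname{alg}})$ to $(Y,\B_Y)$ satisfying $f\circ U=g\circ V$, the universal property of $Z$ in varieties produces a unique $h_0\colon W\to Z$ with $p_0 h_0=u_0$ and $q_0 h_0=v_0$. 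Since $h_0^*\B_Z=h_0^*q_0^*\B_Y=v_0^*\B_Y$, I set $h_{\operatorname{alg}}=v_{\operatorname{alg}}$ and let $H=(h_0,h_{\operatorname{alg}})$. Then $Q\circ H=V$ is immediate because $q_{\operatorname{alg}}=\id$.

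The equality $P\circ H=U$ is where the hypotheses are genuinely used. Unwinding $f\circ U=g\circ V$ on algebra parts gives $u_{\operatorname{alg}}\circ u_0^*(f_{\operatorname{alg}})=v_{\operatorname{alg}}\circ v_0^*(g_{\operatorname{alg}})$; since $f$ is strict I may invert $u_0^*(f_{\operatorname{alg}})$ to obtain $u_{\operatorname{alg}}=v_{\operatorname{alg}}\circ v_0^*(g_{\operatorname{alg}})\circ u_0^*(f_{\operatorname{alg}}^{-1})$, and pulling back the definition of $p_{\operatorname{alg}}$ along $h_0$ reproduces exactly this composite, so $(P\circ H)_{\operatorname{alg}}=u_{\operatorname{alg}}$. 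Uniqueness of $H$ follows because $h_0$ is forced in the variety category and $h_{\operatorname{alg}}$ is forced by $Q\circ H=V$ together with $q_{\operatorname{alg}}=\id$. The case with $g$ strict is obtained by interchanging the roles of $X$ and $Y$, $f$ and $g$, and $p_0$ and $q_0$. I expect the only real obstacle to be keeping track of the canonical isomorphisms $p_0^*f_0^*\cong(f_0 p_0)^*\cong(g_0 q_0)^*\cong q_0^*g_0^*$ and checking their compatibility, together with the observation that strictness of $f$, namely the invertibility of $f_{\operatorname{alg}}$, is indispensable: without it $p_{\operatorname{alg}}$ cannot be defined and the mediating algebra map need not exist.
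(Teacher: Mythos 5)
Your proof is correct and follows essentially the same route as the paper's: both take $q_{\operatorname{alg}}=\id$, define $p_{\operatorname{alg}}$ by combining strictness of $f$ with the canonical identification $p_0^*f_0^*\B_S\cong q_0^*g_0^*\B_S$ and $q_0^*(g_{\operatorname{alg}})$, and build the mediating morphism from the variety-level fibre product together with $v_{\operatorname{alg}}$. If anything, your verification of $(P\circ H)_{\operatorname{alg}}=u_{\operatorname{alg}}$ and of uniqueness is more explicit than the paper's, which leaves those checks as ``follows by the definitions of $q$ and $p$.''
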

\begin{proof}
The proof follows the same lines of Lemma 10.37 in \cite{HYP}, which does not actually require the hypothesis of the algebras being Azumaya.
Assume $f$ to be strict, that is $\B_X = f_0^*\B_S$. First define the morphisms $q:=(q_0,q_{\operatorname{alg}})$ and $p:=(p_0,p_{\operatorname{alg}})$ that makes the following diagram commute
\begin{equation}  \label{dgr:basechange2}
	\xymatrix{
 (X \times_S Y,q_0^*\B_Y)\ar[d]^{q}\ar[r]^-{p} & (X,\B_X)\ar[d]^{f} \\
 (Y,\B_Y)\ar[r]^{g} & (S,\B_S)
}
\end{equation}
The map $q_{\operatorname{alg}}:q_0^* \B_Y \rightarrow q_0^* \B_Y$ is the identity. It remains to define $p_{\operatorname{alg}}:p_0^* \B_X \rightarrow q_0^* \B_Y$. Since $f$ is strict $p_0^* \B_X$ is equal to $p_0^* f_0^* \B_S$, and since Square (\ref{dgr:basechange}) is a base change, $p_0^* f_0^* \B_S$ is equal to $q_0^* g_0^* \B_S$. One can then choose as $p_{\operatorname{alg}}$ the morphism induced by $g_{\operatorname{alg}}$. This choice makes Square (\ref{dgr:basechange2}) commute.
Let now $u:(Z,\B_Z) \rightarrow (X,\B_X)$ and $v:(Z,\B_Z) \rightarrow (Y,\B_Y)$ be morphisms of noncommutative varieties such that $f \circ u = g \circ v$. 
At the level of varieties, the base change (\ref{dgr:basechange}) gives the existence of a morphism $t_0 : Z \rightarrow X \times_S Y$ such that $p_0 \circ t_0 = u_0$ and $q_0 \circ t_0 = v_0$. The morphism $t_{\operatorname{alg}}: t_0^* q_0^* \B_Y \rightarrow \B_Z$ will be the one induced by $v_{\operatorname{alg}}:v_0^*\B_Y \rightarrow \B_Z$. The two properties $v=q \circ t$ and $u=q \circ p$ follows by the definitions of $q$ and $p$.
The same proof holds in the case $g$ is strict.
\end{proof}

The following definition is the analogous for our case of \cite[Definition 2.18]{HYP}.

\begin{defn}  \label{Defn:ExactCartesian}
Let $(X,\B_X) \xrightarrow{f} (S,\B_{S})$, and $(Y,\B_Y) \xrightarrow{g} (S,\B_{S})$ be morphisms of noncommutative varieties. If either $f$ or $g$ is strict, then Proposition \ref{prop:pullbackazumvar} ensures that the fibre product $(X \times_S Y, \B_{X \times_S Y})$ is defined. The square
\begin{equation} \label{Sq:EC}
	\xymatrix{
X \times_S Y \ar[r]^q\ar[d]^p & Y\ar[d]^g\\
X\ar[r]^f & S
}
\end{equation} is said to be exact cartesian if the natural morphism of functors 
$$g^*f_* \to q_*p^*:\der^-(X,\B_X) \to \der^-(Y,\B_Y)$$
is an isomorphism.
\end{defn}

The following lemma follows the same lines of \cite[Corollary 2.27]{HYP}, stated in the context of Azumaya varieties. The difference here is that $\B_0$ and $\bar{\B_0}$ are only Clifford algebras and not Azumaya. For the convenience of the reader we are rewriting the proof.

\begin{lemma} \label{Lemma:ExactCartesian}
In the same notation of Definition \ref{Defn:ExactCartesian}, assume $g$ to be a strict closed embedding, $Y$ a locally complete intersection in $S$ and both $S$ and $X$ to be cohen-Macaulay. If the codimension on $X$ of $X \times_S Y$ is equal to the codimension on $S$ of $Y$, then Square (\ref{Sq:EC}) is exact cartesian.
\end{lemma}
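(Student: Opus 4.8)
The plan is to strip away the sheaves of algebras, reducing the assertion to ordinary Tor-independent base change for the underlying schemes, and then to verify Tor-independence from the Cohen--Macaulay and local complete intersection hypotheses by a Koszul resolution argument. Because $g$ is a strict closed embedding, Proposition \ref{prop:pullbackazumvar} gives $\B_{X \times_S Y} = p_0^*\B_X$, so $p$ is strict as well; applying the forgetful functors to $\sO$-modules, the pullbacks $g^*$ and $p^*$ become $g_0^*$ and $p_0^*$ (no residual tensor factor survives, since $\B_Y = g_0^*\B_S$ and $\B_{X\times_S Y} = p_0^*\B_X$), while $f_*$ and $q_*$ become $f_{0*}$ and $q_{0*}$ with the module structures merely carried along. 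Exactly as in \cite[Corollary 2.27]{HYP}, the natural transformation $g^*f_* \to q_*p^*$ is compatible with these forgetful functors; this is the one place where the remark that $\B_0$ and $\bar{\B_0}$ need not be Azumaya is relevant, namely that nothing in what follows uses Azumaya-ness. Since a morphism of complexes of $\B$-modules is an isomorphism precisely when it is one on the underlying complexes of $\sO$-modules, it suffices to prove that the untwisted base change morphism $g_0^* f_{0*} \to q_{0*} p_0^*$ is an isomorphism on $\der^-(X)$.

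Next I would check this pointwise, locally on $S$. The hypotheses that $g_0: Y \hookrightarrow S$ is a local complete intersection embedding and that $S$ is Cohen--Macaulay guarantee that near any point the ideal sheaf $\I_Y \subset \sO_S$ is generated by a regular sequence $t_1, \dots, t_c$ of length $c = \operatorname{codim}_S Y$. The Koszul complex $K_\bullet(t_1, \dots, t_c)$ is then a finite locally free resolution of $\sO_Y$, so that
$$\sO_X \otimes^{L}_{\sO_S} \sO_Y \;\simeq\; K_\bullet\bigl(\bar t_1, \dots, \bar t_c;\, \sO_X\bigr),$$
the Koszul complex on the images $\bar t_i = f_0^\sharp t_i$ in $\sO_X$.

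The heart of the argument, and the only step I expect to be non-formal, is to show that $\bar t_1, \dots, \bar t_c$ is again a regular sequence. Its common zero locus is exactly $X \times_S Y$, which by hypothesis has codimension $c$ in $X$; since $X$ is Cohen--Macaulay, any sequence of $c$ elements whose vanishing locus has codimension $c$ is automatically regular, because in a Cohen--Macaulay local ring the height and the depth of such an ideal agree. Hence the Koszul complex above is acyclic in positive degrees and computes $\sO_{X \times_S Y}$ in degree zero, giving $\operatorname{Tor}_i^{\sO_S}(\sO_X, \sO_Y) = 0$ for $i > 0$, which is precisely Tor-independence of Square (\ref{dgr:basechange}). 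Finally I would invoke the Tor-independent base change theorem for bounded-above derived categories: a hyper-Tor spectral sequence propagates the pointwise vanishing to arbitrary objects of $\der^-(X)$ and shows that $g_0^* f_{0*} \to q_{0*} p_0^*$ is an isomorphism. By the reduction of the first paragraph the twisted morphism $g^* f_* \to q_* p^*$ is then an isomorphism as well, so Square (\ref{Sq:EC}) is exact cartesian.
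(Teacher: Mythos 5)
Your proof is correct, and it takes a genuinely different route from the paper's in how it reduces the functorial statement to a computation, even though the computational core ends up being the same. The paper stays entirely inside the twisted world and follows Kuznetsov's formalism: since $g$ is finite, exact-cartesianness is equivalent, by the projection-formula and kernel-functor argument of \cite[Lemmas 2.8(b) and 2.26]{HYP}, to the single sheaf-level isomorphism $f^*g_*\B_Y \cong p_*\B_{X \times_S Y}$; this is then verified locally by writing $Y$ as the zero locus of a regular section $s$ of a vector bundle of rank $\operatorname{codim}_S Y$ and comparing the Koszul complexes $\operatorname{Kosz}_S(s)$ and $\operatorname{Kosz}_X(f^*s)$, the codimension hypothesis together with Cohen--Macaulayness of $X$ forcing $f^*s$ to be regular. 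You instead use strictness of $g$ (hence of $p$, by Proposition \ref{prop:pullbackazumvar}) to strip off the algebras via the forgetful functors and reduce to classical Tor-independent base change for the underlying schemes, proving Tor-independence by the same Koszul argument: the pulled-back regular sequence cuts out a locus of the right codimension in the Cohen--Macaulay variety $X$, hence is regular. So both proofs hinge on the identical commutative-algebra fact, and the difference is purely in the reduction: your route buys independence from the kernel-functor machinery of \cite{HYP}, needing only scheme-level base change, while the paper's route keeps every functorial statement as a citation and leaves only one sheaf isomorphism to check. The one step you should treat more carefully is the assertion that the twisted base-change morphism $g^*f_* \to q_*p^*$ forgets to the untwisted one $g_0^*f_{0*} \to q_{0*}p_0^*$: this is true for strict morphisms (the twisted pullback along a strict morphism is the untwisted pullback with the module structure transported, pushforwards always forget to pushforwards, and the adjunction units match), but it is not what \cite[Corollary 2.27]{HYP} says --- that corollary is the Azumaya-variety analogue of the whole lemma --- so this compatibility deserves an explicit verification by unwinding the adjunctions rather than a citation.
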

\begin{proof}
Let us first remark the content of \cite[Lemma 2.26]{HYP}. Since the map $g$ is finite, Square (\ref{Sq:EC}) is exact cartesian if the morphism 
\begin{equation} \label{Eqn:morphlemmaEC}
f^*g_* \B_Y \to p_*q^* \B_Y=p_* \B_{X \times_S Y}
\end{equation}
is an isomorphism. For any $F \in \der^-(X,\B_X)$ we get
$$g_*g^*f_*(F) \cong f_*(F) \otimes_{\B_S} g_*\B_Y \cong f_*(F \otimes_{\B_X} p_* \B_{X \times_S Y}) \cong f_* p_* p^* (F) \cong g_*q_*p^*(F).$$
This isomorphism is induced by the pushforward $g_*$ applied to the morphism $g^*f_* \to q_*p^*$, which is a morphism of kernel functors: 
$$f_* \cong \Phi_{(\Gamma_{f})_*\B_X} \qquad g^* \cong \Phi_{(\Gamma_g)_*\B_Y} \qquad q_*p^* \cong \Phi_{\B_{X \times_S Y}},$$
where $\Gamma_f: X \to X \times S$ and $\Gamma_g: Y \to S \times Y$ are the graphs of $f$ and $g$ respectively. We can then apply the result of \cite[Lemma 2.8,b]{HYP}, which only involves the varieties and not the sheaf of algebras, to get the desired isomorphism $g^*f_* \cong q_*p^*$.

We are left to check that the homomorphism (\ref{Eqn:morphlemmaEC}) is an isomorphism. Since the claim is local in $S$, we may assume $Y$ to be the zero locus of a regular section $s$ of a vector bundle $\sV$ on $S$ having rank equal to the codimension of $Y$ in $S$. Recall that $\operatorname{Kosz}_S(s)$ is the Koszul complex of the section $s$:
$$\operatorname{Kosz}_S(s):=\{0 \to \Lambda^{\operatorname{top}} \sV^* \otimes_{\sO_X} \B_X \xrightarrow{s} \ldots  \xrightarrow{s} \sV^* \otimes_{\sO_X} \B_X \xrightarrow{s} \B_X \to 0\}$$
Since $s$ is regular and $X$ is Cohen-Macaulay we have $g_* \B_Y \cong \operatorname{Kosz}_S(s)$. 

The zero locus of $f^*s$ on $X$ is the fibre product $X \times S Y$, but by the hypothesis on the codimension on $X$ of $X \times_S Y$, we have $\operatorname{codim}_X(X \otimes_S Y) = \rk f^* \sV$. Therefore the section $f^*s$ is regular and we get an isomorphism and so $f^*g_*\B_Y \cong \operatorname{Kosz}_X(f^*s)$. This leads to the isomorphism $\operatorname{Kosz}_X(f^*s) \cong p_* \B_{X \times_S Y}$ which concludes the proof.
\end{proof}

\section{The geometric context} \label{Sec:geometriccontext}
Let $Y \subset \P(V)$ be a hypersurface of degree $3$ containing a projective space $A$. Let $V$ be a vector space of dimension $n+1$ and $A$ be $\P(A_0)$ for a vector subspace $A_0$ of $V$. Let $B_0$ be the quotient space $V / A_0$, and $B:=\P(B_0)$. As described in the introduction, it is possible to obtain a quadric fibration $Y^+\rightarrow B$ that is associated to a line bundle valued quadratic form $(\E,q,\sL)$. By using the same notation of \cite{KUZNETSOV}, the vector bundle $\E$ is $A_0 \otimes \sO_B \oplus \sO_B(-1)$, the line bundle $\sL$ is $\sO_B(1)$ and the map $q$ depends on the equations of $Y$. 
\begin{oss}
In the case of a generic cubic fourfold $Y$ in $\Csharp_8$, this quadratic form turns out to be simply degenerate, and then, since $Y$ is also smooth, the vanishing of $B_2$ implies that the sextic curve $C$ on $B$, that coincides with the discriminant divisor $B_1$ of the quadratic form, is also smooth. Hence, one obtains a sheaf of Azumaya algebras on the double cover of the plane $B$ ramified along $C$. Proposition \ref{Prop:AzumayaAlgebraEvenFibration} can also be applied when, for a non generic $Y$, the singular locus $B_2$ of the sextic $C$ turns out to be non empty. The sheaf of algebras obtained is Azumaya only on the complement of $f^{-1}(B_2) \subset S$.
\end{oss}
One can compute the symmetric product of $\E^*$ tensored by $\sL$, according to one of the equivalent definitions of a line bundle valued quadratic form. 
\begin{align*}
S^2\E^* \otimes \sL^* &= S^2\left(A_0^* \otimes \sO_B \oplus \sO_B(1)\right) \otimes \sO(1)\\
                     &= S^2\left(A_0^*\right) \otimes \sO(1) \oplus \left(A_0^* \otimes \sO(2)\right) \oplus \sO(3)
\end{align*}
A section of this bundle is given by a section of $S^2 A_0^*\otimes \sO(1)$, a section of $A_0^* \otimes \sO(2)$ and a section of $\sO(3)$. The same setting can be described in coordinates as follows. Let the ambient space be $\P^n=\P^n(x_0: \cdots : x_b : y_0 : \cdots : y_a)$, with $a+b+1=n$. The space $A$ of dimension $a$ will be $\P^a(y_0: \cdots : y_a)=\{x_0= \cdots = x_b = 0\}$ and the space $B$ will be $\P^b(x_0: \cdots : x_b)=\{y_0= \cdots = y_a = 0\}$. Notice that $A \cap B = \emptyset$. 
Let $\pi$ be the projection map $\P^n \dashrightarrow B$ from the space $A$. One can assume $Y:= \{F=0\}$, 
\begin{equation*}
F:= \sum_{i,j=0}^a l_{ij}y_iy_j + 2\sum_{k=0}^a q_k y_k + f
\end{equation*}
where $l_{ij}$, $q_k$ and $f$ are polynomials in $x_0, \cdots x_b$ of degree one, two and three, respectively, and $l_{ij}=l_{ji}$. One can arrange these polynomials in the following symmetric matrix
$$M:=\left( \begin{array}{ccc|c}
l_{00} & \cdots & l_{0a} & q_0\\
\vdots & \ddots & \vdots & \vdots\\
l_{a0} & \cdots & l_{aa} & q_a\\
\hline
q_0 & \cdots & q_a & f\\
\end{array}\right)$$ 

The discriminant divisor of the quadric fibration $Y^+ \to B$ coincides with the determinant of $M$, which is a hypersurface of degree $a+4$ in $B$.

The matrix $M$ can be seen as an injective map between vector bundles, part of the following exact sequence
\begin{equation} \label{Eqn:MapM}
0 \rightarrow \sO_{\P^b}(-2)^{a+1} \oplus \sO_{\P^b}(-3) \xrightarrow{M} \sO_{\P^b}(-1)^{a+1} \oplus \sO_{\P^b} \rightarrow \F \rightarrow 0
\end{equation}
The following proposition is just a recap of \cite{BDet} and relates the matrix $M$ with a line bundle supported on the hypersurface $\{\det M = 0\}$.
\begin{prop}
The cokernel $\F$ of the map induced by $M$ in (\ref{Eqn:MapM}) is an ACM sheaf supported on the hypersurface $C = \{\det M = 0\}$. If $C$ is smooth, $\F$ is a line bundle. 
\end{prop}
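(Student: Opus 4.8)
The plan is to treat $M$ as a generically invertible morphism between two vector bundles of the same rank $a+2$ and to read off every property of $\F$ directly from the short exact sequence (\ref{Eqn:MapM}).

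First I would pin down the support. Since $\det M$ is the degree $a+4$ form cutting out $C$ and is not identically zero, $M$ is an isomorphism over $\P^b \setminus C$; in particular its kernel is a torsion subsheaf of the torsion-free bundle $\sO_{\P^b}(-2)^{a+1}\oplus\sO_{\P^b}(-3)$, hence zero, so (\ref{Eqn:MapM}) is a genuine short exact sequence and $\F=\coker(M)$ is supported exactly on $C$. Cramer's rule in the form $\operatorname{adj}(M)\,M = M\,\operatorname{adj}(M)=\det M\cdot\id$ shows that $\det M$ annihilates $\F$, so $\F$ is naturally a sheaf of $\sO_C$-modules; I write $\F=i_*\G$ with $i\colon C\hookrightarrow\P^b$ the inclusion.

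Next, the ACM property. A sheaf on $\P^b$ whose support has dimension $b-1$ is ACM precisely when $H^i(\F(t))=0$ for all $t$ and all $0<i<b-1$. Twisting (\ref{Eqn:MapM}) by $\sO_{\P^b}(t)$ and passing to the long exact cohomology sequence, I would use that the outer terms are direct sums of line bundles $\sO_{\P^b}(k)$, whose intermediate cohomology $H^i(\sO_{\P^b}(k))$ vanishes for $0<i<b$ and every $k$. For $1\le i\le b-2$ both neighbouring terms $H^i(\text{middle})$ and $H^{i+1}(\text{left})$ vanish, forcing $H^i(\F(t))=0$ in exactly the required range. Equivalently, (\ref{Eqn:MapM}) is a length-one locally free resolution of $\F$ by sums of line bundles, so by the Auslander--Buchsbaum formula the associated graded module is Cohen--Macaulay of codimension one; thus $\F$ is ACM and $\G$ is a pure $(b-1)$-dimensional Cohen--Macaulay sheaf on $C$.

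Finally, assume $C$ smooth. I would differentiate the determinant through the adjugate via Jacobi's formula, $\partial_{x_k}(\det M)=\operatorname{tr}\bigl(\operatorname{adj}(M)\,\partial_{x_k}M\bigr)$. At a point of $C$ where $\operatorname{corank}M\ge 2$ the rank of $M$ is at most $a$, so all $(a+1)\times(a+1)$ minors vanish, giving $\operatorname{adj}(M)=0$ and hence $\partial_{x_k}(\det M)=0$ for every $k$; together with $\det M=0$ this makes the point singular on $C$. Smoothness therefore forces $\operatorname{corank}M=1$ at every point of $C$, whence by Nakayama each stalk $\G_p$ is cyclic over $\sO_{C,p}$, say $\G_p\cong\sO_{C,p}/J$. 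Because $C$ is integral (a smooth connected hypersurface) and $\G$ is pure of dimension $b-1$, a nonzero $J$ would create an associated component of strictly smaller dimension through $p$, contradicting purity; hence $J=0$, $\G$ is invertible, and $\F$ is a line bundle on $C$. I expect this last step — upgrading generic rank one to genuine local freeness — to be the only delicate point: it is precisely here that smoothness of $C$ is used twice, first through the adjugate computation to force corank one, and then, combined with the purity furnished by the ACM property, to rule out torsion in the cyclic stalks.
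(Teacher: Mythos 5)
Your proof is correct, but it takes a genuinely different route from the paper: the paper's entire proof is a one-line citation, noting that the statement is a special case of Theorem A and Corollary 1.8 of Beauville's \emph{Determinantal hypersurfaces} \cite{BDet}. You instead reprove the needed special case from scratch, and your three steps are sound: the support and $\sO_C$-module structure via Cramer's rule and the adjugate; the ACM property via the length-one resolution by direct sums of line bundles together with Auslander--Buchsbaum; and, when $C$ is smooth, corank one everywhere via Jacobi's formula $\partial(\det M)=\operatorname{tr}(\operatorname{adj}(M)\,\partial M)$, followed by Nakayama and the purity of $\F$ to promote cyclic stalks to invertibility. What the citation buys the paper is generality and the converse direction: Beauville's theorem is a correspondence between determinantal representations and ACM sheaves (and, for symmetric $M$, theta-characteristics), which is precisely the form invoked a few lines later when the paper recalls the Beauville--Voisin correspondence for cubic fourfolds containing a plane. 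What your argument buys is self-containedness, and it makes visible exactly where smoothness of $C$ enters.

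One caveat: your opening characterization of ACM --- ``a sheaf on $\P^b$ whose support has dimension $b-1$ is ACM precisely when $H^i(\F(t))=0$ for all $t$ and all $0<i<b-1$'' --- is not a correct biconditional. It omits the depth/purity requirement: for instance $\sO_C\oplus\sO_p$ ($p$ a point off $C$) has vanishing intermediate cohomology but is not ACM. Worse, in the paper's principal case $b=2$ the range $0<i<1$ is empty, so this criterion carries no information at all and could not by itself establish the ACM property of $\F$, nor the purity you need in the final step. Fortunately the argument you offer as ``equivalent'' --- the free resolution of the graded module of length one plus Auslander--Buchsbaum, giving a Cohen--Macaulay module of dimension $b$ --- is the one that actually does the work, and it is complete (note that exactness of the sequence of graded modules uses $H^1(\sO_{\P^b}(k))=0$, hence $b\geq 2$). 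So the proof stands, but the two criteria should not be presented as interchangeable.
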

\begin{proof}
This is just a special case of Theorem A and Corollary $1.8$ in \cite{BDet}.
\end{proof}

In the case of a cubic fourfold $Y$ containing a plane, Beauville and Voisin in \cite{BDet}, \cite{BPrym} and \cite{VOISIN} proved that there is a correspondence between $Y$ and the curve $C$ of degree six defined by $\{\det(M)=0\}$ in the plane $B$, together with a theta-characteristic on $C$. The singular case was studied by Stellari in \cite{STHETA}. Notice that, in coordinates, this corresponds to the case $n=5$, $a=b=2$. 

Going back to the derived categories, viewing the cubic fourfold as a quadric fibration is the key for the following proposition, which relates the Kuznetsov component of $\der^b(Y)$ with a twisted derived category. Notice that since $Y$ is smooth, containing a linear subspace half-dimensional, then the arising quadric fibration will be flat. One can perform the same construction carried out in the introduction obtaining a flat quadric fibration with base a plane $B$ disjoint from $A$; we recall the following

\begin{prop}{\cite[Theorem 4.3]{KUZNETSOV}} \label{prop:stepone}
Let $Y$ be a cubic fourfold containing a plane $A$. There is an equivalence
$$\T_Y \cong \der^b(B,\B_0)$$
where $\T_Y$ is the Kuznetsov component of $\der^b(Y)$ and $\der^b(B,\B_0)$ is the derived category of $B$ twisted by the Clifford algebra $\B_0$ related to the quadric fibration.
\end{prop}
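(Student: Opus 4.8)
The plan is to compare two semiorthogonal decompositions of the derived category of the blow-up $Y^+$: one induced by the blow-down onto $Y$ and one by the quadric fibration onto $B$. Let $\sigma: Y^+ \to Y$ denote the blow-up of $Y$ along the plane $A \cong \P^2$, which has codimension two, let $E$ be its exceptional divisor, and let $\pi: Y^+ \to B$ be the induced flat quadric fibration, whose fibres are two-dimensional quadrics in $\P^3$. I write $\sO_\pi(1)$ for the relative hyperplane bundle and $\B_0$ for the even Clifford algebra of the quadratic form $(\E, q, \sL)$ of Section \ref{Sec:geometriccontext}.

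On the blow-down side, Orlov's blow-up formula for the smooth codimension-two centre $A$ gives $\der^b(Y^+) = \langle \der^b(A), \sigma^* \der^b(Y) \rangle$. Refining the second factor by the standard decomposition $\der^b(Y) = \langle \T_Y, \sO_Y, \sO_Y(1), \sO_Y(2) \rangle$ and pulling back produces
$$\der^b(Y^+) = \langle \der^b(A), \sigma^* \T_Y, \sigma^*\sO_Y, \sigma^*\sO_Y(1), \sigma^*\sO_Y(2) \rangle,$$
in which $\sigma^*$ is fully faithful on $\T_Y$ and $\der^b(A) = \der^b(\P^2)$ supplies a three-term exceptional collection. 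On the quadric-fibration side, Kuznetsov's theorem for even-dimensional quadric fibrations yields
$$\der^b(Y^+) = \langle \der^b(B, \B_0), \pi^*\der^b(B) \otimes \sO_\pi(1), \pi^*\der^b(B) \otimes \sO_\pi(2) \rangle,$$
where the twisted factor $\der^b(B, \B_0)$ is exactly the left orthogonal to the two copies of $\pi^*\der^b(B)$ twisted by $\sO_\pi(1)$ and $\sO_\pi(2)$. Both decompositions consist of six exceptional objects together with one large admissible subcategory ($\sigma^*\T_Y$, respectively $\der^b(B,\B_0)$), which is the numerical consistency check underlying the comparison.

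The heart of the argument is to transform the first decomposition into the second by a sequence of mutations inside $\der^b(Y^+)$, and then to read off that the leftover admissible subcategories coincide. Concretely I would show that the subcategory generated by $\sigma^*\sO_Y, \sigma^*\sO_Y(1), \sigma^*\sO_Y(2)$ together with the contribution of $\der^b(A)$ agrees, after mutation, with the one generated by the twists $\pi^*\der^b(B)\otimes \sO_\pi(1)$ and $\pi^*\der^b(B)\otimes\sO_\pi(2)$. This forces $\sigma^*\T_Y \cong \der^b(B, \B_0)$, and since $\sigma_*\sigma^* \cong \id_{\der^b(Y)}$ with $\sigma^*$ fully faithful, the equivalence descends to $\T_Y \cong \der^b(B, \B_0)$.

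I expect the mutation bookkeeping to be the main obstacle. The delicate point is to track the two distinct line bundles on $Y^+$ --- the pullback $\sigma^*\sO_Y(1)$ and the relative twist $\sO_\pi(1)$, which differ by a combination of $\pi^*\sO_B(1)$ and of the class of $E$ --- and to compute the $\hom$-spaces between the pulled-back exceptional line bundles, the objects of $\der^b(A)$ supported on $E$, and the relative twists, in order to verify that each mutation lands precisely on the prescribed quadric-fibration object. Once the two exceptional collections are aligned, identifying the orthogonal complements is formal; this is exactly the computation performed in \cite[Section 4]{KUZNETSOV}.
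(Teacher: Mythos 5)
You should first be aware that the paper contains no proof of this proposition: it is stated, with the citation attached to its header, as a recollection of \cite[Theorem 4.3]{KUZNETSOV}, and is then used as a black box (it is equivalence (\ref{EStep1}) in the proof of Theorem \ref{teor:extension}). So there is no in-paper argument to measure your proposal against; the only meaningful comparison is with Kuznetsov's original proof. Measured against that, your outline is faithful and correct in its main lines: Orlov's blow-up decomposition of $\der^b(Y^+)$ refined by $\der^b(Y)=\langle \T_Y, \sO_Y, \sO_Y(1), \sO_Y(2)\rangle$, Kuznetsov's quadric-fibration decomposition $\langle \der^b(B,\B_0), \pi^*\der^b(B)\otimes\sO_\pi(1), \pi^*\der^b(B)\otimes\sO_\pi(2)\rangle$, and a chain of mutations aligning the six exceptional objects of one decomposition with those of the other, after which $\sigma^*\T_Y$ and $\der^b(B,\B_0)$ are both exhibited as the orthogonal of the same admissible subcategory and hence coincide; descending along the fully faithful $\sigma^*$ then gives $\T_Y \cong \der^b(B,\B_0)$. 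This is precisely the strategy of \cite[Section 4]{KUZNETSOV}. The one point to be clear-eyed about is that the step you defer --- the mutation bookkeeping, including the comparison of $\sigma^*\sO_Y(1)$ with $\sO_\pi(1)$ and the class of $E$, and the computation of the relevant $\hom$-spaces --- is not a routine verification but the entire technical content of the cited theorem; so what you have is a correct plan whose hard step is delegated to the same reference the paper itself cites, which is consistent with how the paper treats the statement.
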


\section{Cubic fourfolds in $\Csharp_8$ with an associated nodal sextic} \label{Sec:nodalsextic}
Let's now consider a cubic fourfold $Y$ with an associated sextic $C$ that is nodal. This hypothesis makes $Y$ non generic in $\Csharp_8$. Since $Y$ is smooth, the fibres over the nodes of the sextic are double planes, see for instance \cite{BPrym}. This is equivalent to fact that $B_2=\sing(B_1)$ in terms of the degerenacy locus of the associated line bundle valued quadratic form.

\begin{oss} \label{Rmk:FormSingular}
One can encode the information about the singularity of the sextic in the equation of the cubic fourfold. Up to a change of coordinates, one can assume one node of the sextic to be the point $x:=(1:0:0) \in \P^2(x_0:x_1:x_2)=:B$. The fibre over $x$ is the union of the plane $A$ and two other planes in the space generated by $A$ and $x$, that is $\P^3(x_0:y_0:y_1:y_2)$. One can also assume that, in this $\P^3$, the other two planes have equations $y_1=0$ and $y_2=0$. Hence, the equation of the cubic fourfold $Y$ is $x_0 y_1 y_2 + x_1 k_1 + x_2 k_2 =0$ where $k_1$ and $k_2$ are quadratic polynomials in $x_i$ and $y_i$.
\end{oss}

This geometric picture can be thought as a hyperplane section of a similar picture in $\P^6(x_0:x_1:x_2:x_3:y_0:y_1:y_2)$. Let $\bar{Y}$ be the cubic fivefold defined by the equation $F + x_3 \bar{F}=0$, where $\bar{F}$ is a homogeneous polynomial of degree $2$. $\bar{Y}$ still contains the plane $A$, which in $\P^6$ has equations $\{x_0 = x_1 = x_2 = x_3 = 0\}$ and one can project onto the $\P^3$ defined by $\bar{B} := \{y_0=y_1=y_2=0\}$. Exploiting the construction of $M$ of the previous section, $\bar{Y}$ is given by the equation
\begin{equation*}
\sum_{i,j=0,1,2} l_{ij}y_iy_j + 2\sum_{k=0,1,2} q_k y_k + f + x_3(\sum_{i,j=0,1,2} \bar{l}_{ij}y_iy_j + 2\sum_{k=0,1,2} \bar{q}_k y_k + \bar{f})=0
\end{equation*}
where $\bar{l}_{ij}$, the $\bar{q}_i$ and $\bar{f}$ are polynomials in $x_0$, $x_1$, $x_2$, $x_3$ of degree zero, one and two, respectively, and $\bar{l}_{ij}=\bar{l}_{ji}$.
One can define a symmetric matrix $\bar{M}$, as before:
\begin{equation} \label{Eqn:matrixform}
\left( \begin{array}{c|c}
l_{ij} & q_k\\
\hline
q_k & f\\
\end{array}\right)
+x_3 \left( \begin{array}{c|c}
\bar{l}_{ij} & \bar{q}_k\\
\hline
\bar{q}_k & \bar{f}\\
\end{array}\right)
\end{equation}

\noindent Taking the determinant of this matrix gives rise to a sextic surface $\bar{C}$ in $\P^3(x_0:x_1:x_2:x_3)$.

The idea is to exploit the work of Kuznetsov \cite{K2} to obtain an Azumaya algebra related to the Clifford algebra $\B_0$. The first step consists in proving that is it always possible to find a cubic fivefold as described above with the singularities of the sextic surface $\bar{C}$ being at most isolated nodes. The situation can be reformulated in terms of degeneracy locus of maps between vector bundles. Recall the following
\begin{defn}\label{Defn:DegeneracyLocus}
Let $\E$ and $\F$ be vector bundles on a projective variety $X$, $\phi: \E \rightarrow \F$ a morphism and $k$ a positive integer. Then,
$$D_k(\phi)=\{x \in X \st \rk(\phi_x) \leq k\}$$
is called the $k$-degeneracy locus of $\phi$.
\end{defn}
As in Section \ref{Sec:geometriccontext}, but now for the case of $B = \P^3$, let $\E$ be the vector bundle $3\sO_{\P^3} \oplus \sO_{\P^3}(-1)$ on $\P^3$. As seen in the previous section, a matrix of homogeneous forms on $\P^3$ with the same degrees as the matrix in Equation (\ref{Eqn:matrixform}) defines a symmetric map $\phi: \E \rightarrow \E^\star(1)$ as in Equation (\ref{Eqn:MapM}), after tensoring the exact sequence with the line bundle $\sO_{\P^3}(2)$. This map can be seen as a section of the bundle $S^2 \E \otimes \sO_{\P^3}(1)$ and its symmetrical degeneracy locus $D_3(\phi)$ coincides with the locus of $\P^3$ where the determinant of the matrix (\ref{Eqn:matrixform}) is equal to zero, that is the sextic surface $\bar{C}$.
The choice of the second matrix in (\ref{Eqn:matrixform}) defines a linear system $\T$ of sections of $S^2 \E \otimes \sO_{\P^3}(1)$.
\begin{oss}
Let $\E=3\sO_{\P^3} \oplus \sO_{\P^3}(-1)$ as above. The degeneracy locus $D_3(\phi)$, of Definition \ref{Defn:DegeneracyLocus} coincides with the first degeneration locus of the line bundle valued quadratic form of Definition \ref{defn:degenerationlocus}, $B_{1}(\sigma)$, where $\sigma: \sO_{\P^3}(-1) \rightarrow S^2 \E$ is a map equivalent to $\phi$.
\end{oss}
The next lemma, following \cite{OTT}, holds in general when $X$ is a projective variety.
\begin{lemma} \label{Lemma:SymmetricCodimension}
Let $\E$ be a vector bundle of rank $n$, $\sL$ be a line bundle on $X$ and let $\phi: \E \rightarrow \E^\star \otimes \sL$ be a generic symmetric morphism.
If $S^2 \E^\star \otimes \sL$ is globally generated, then $D_k(\phi)$ is empty or of codimension $\binom{n-k+1}{2}$. Moreover, $\sing(D_k(\phi))$ is contained in $D_{k-1}(\phi)$.
\end{lemma}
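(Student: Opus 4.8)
The plan is to reduce the statement to the well-understood local geometry of the space of symmetric matrices of bounded rank, and then to transfer that local picture to $X$ by a transversality argument powered by the global generation hypothesis. First I would recall that a symmetric morphism $\phi\colon\E\to\E^\star\otimes\sL$ is the same datum as a global section $s_\phi$ of the bundle $\mathcal{S}:=S^2\E^\star\otimes\sL$, whose fibre over a point $x\in X$ is the space of $\sL_x$-valued symmetric bilinear forms on $\E_x$. With this identification the degeneracy locus is the preimage $D_k(\phi)=s_\phi^{-1}(\mathcal{D}_k)$, where $\mathcal{D}_k$ denotes the relative symmetric determinantal subvariety of the total space of $\mathcal{S}$, cut out fibrewise by the vanishing of all $(k+1)\times(k+1)$ minors; since $\sL$ is a line bundle, twisting by it does not alter the rank stratification, so this locus is well defined.

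Next I would invoke the classical structure of the generic symmetric determinantal variety. In the fibre $\mathrm{Sym}_n$ of symmetric $n\times n$ matrices, the locus $\Sigma_k$ of matrices of rank at most $k$ is irreducible of codimension $\binom{n-k+1}{2}$, and its singular locus is exactly $\Sigma_{k-1}$; this is the symmetric analogue of the classical determinantal computation, for which one can consult the local model in \cite{OTT}. Consequently $\mathcal{D}_k$ has the same codimension $\binom{n-k+1}{2}$ in the total space of $\mathcal{S}$, and $\sing(\mathcal{D}_k)$ coincides with $\mathcal{D}_{k-1}$.

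The core of the argument is then a Bertini-type transversality statement. I would introduce the finite-dimensional vector space $W=H^0(X,\mathcal{S})$ together with the evaluation map $e\colon X\times W\to\mathcal{S}$ sending $(x,\phi)$ to $s_\phi(x)\in\mathcal{S}_x$. Global generation of $\mathcal{S}$ means precisely that for every fixed $x$ the linear map $W\to\mathcal{S}_x$ is surjective, so the differential of $e$ is already surjective in the $W$-directions and $e$ is a submersion. Hence $e$ is transverse to the smooth stratum $\mathcal{D}_k\smallsetminus\mathcal{D}_{k-1}$, and $e^{-1}(\mathcal{D}_k\smallsetminus\mathcal{D}_{k-1})$ is smooth of codimension $\binom{n-k+1}{2}$ in $X\times W$. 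Projecting to $W$ and applying generic smoothness (we work over $\C$), I would conclude that for a generic $\phi$ the section $s_\phi$ is transverse to this smooth stratum: if the projection is dominant this forces $D_k(\phi)$ to have the expected codimension $\binom{n-k+1}{2}$, otherwise $D_k(\phi)$ is empty for generic $\phi$, giving the stated dichotomy; moreover transversality makes $D_k(\phi)\smallsetminus D_{k-1}(\phi)$ smooth, which is exactly the inclusion $\sing(D_k(\phi))\subseteq D_{k-1}(\phi)$.

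The main obstacle I anticipate is that $\mathcal{D}_k$ is itself singular, so a single transversality statement is not enough: one must handle the stratification $\mathcal{D}_k\supset\mathcal{D}_{k-1}$ with care, establishing transversality along the smooth open stratum to control simultaneously the codimension and the smoothness of $D_k(\phi)$ away from the deeper locus, while checking separately, via the strictly larger codimension of $\mathcal{D}_{k-1}$, that the deeper stratum neither forces $D_k(\phi)$ to drop in codimension nor contributes smooth points of $D_k(\phi)$ lying over $\mathcal{D}_{k-1}$.
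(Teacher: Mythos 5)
Your proposal is correct and follows essentially the same route as the paper: both identify $\phi$ with a section of $S^2\E^\star\otimes\sL$, use global generation to make the evaluation map $X\times H^0(S^2\E^\star\otimes\sL)\to S^2\E^\star\otimes\sL$ a submersion, pull back the fibrewise symmetric determinantal locus $\Sigma_k$ (your $\mathcal{D}_k$), and apply generic smoothness to the projection onto the space of sections to get the dichotomy and the inclusion $\sing(D_k(\phi))\subseteq D_{k-1}(\phi)$. If anything, your treatment is slightly more careful than the paper's, since you explicitly stratify by $\mathcal{D}_k\smallsetminus\mathcal{D}_{k-1}$ and flag the need to control the deeper stratum separately.
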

\begin{proof}
The exact sequence $H^0(S^2 \E^\star \otimes \sL) \otimes \sO_X \rightarrow S^2 \E^\star \otimes \sL \rightarrow 0$
induces a projection that is everywhere of maximal rank
$$X \times H^0(S^2 \E^\star \otimes \sL) \xrightarrow{p} S^2 \E^\star \otimes \sL$$
One can define the variety $\Sigma_k$ inside the total space of $S^2 \E^\star \otimes \sL$ that is composed on each fibre of the $n \times n$ matrices with complex coefficients with rank less or equal than $k$. $\Sigma_k$ has codimension equal to $\binom{n-k+1}{2}$.
Now consider the following diagram where $Z$ is the preimage of $\Sigma_k$ by $p$. 
$$\xymatrix{
& Z \ar[r]\ar[d]\ar[dl]_q & \Sigma_k\ar@{^{(}->}[d]\\
 H^0(S^2 \E \otimes L) & X \times H^0(S^2 \E \otimes L) \ar[r]^-p\ar[l] & S^2 \E \otimes L
}$$
$Z$ is composed of the pairs $(x,\phi)$ where $x$ is a point of $X$, $\phi: \E \rightarrow \E^\star \otimes \sL$ is a symmetric map and the rank of the induced map $\phi_x$ is less or equal than $k-1$. Then, the preimage by $q$ of a generic element $\phi_0$ in $H^0(S^2 \E \otimes L)$ coincides with $\{(x,\phi_0) \st \rk(\phi_0|_x) \leq k-1\}$, which is exactly the definition of $D_k(\phi_0)$.
Notice that $\sing(Z) = p^{-1} \sing(\Sigma_k)$ thus, by restricting $p$, one obtains
$$Z \smallsetminus \sing(Z) \xrightarrow{p|_{Z \smallsetminus \sing(Z)}} H^0(S^2 \E \otimes L).$$
If the image of $p|_{Z \smallsetminus \sing(Z)}$ is dense, then $D_k(\phi)$ is smooth by the generic smoothness theorem, and so $\sing(D_k(\phi)) \subset D_{k-1}(\phi)$. If the image is not dense, then $D_k(\phi)$ is empty for generic $\phi$.
\end{proof}

The following proposition ensures that for any cubic fourfold $Y$ with an associated nodal sextic curve $C$ is it possible to find a smooth cubic fivefold $\bar{Y}$ with the associated sextic surface $\bar{C}$ having singularities of codimension $3$ in $\P^3$.

\begin{prop}\label{prop:LocusCodThree}
Let $\F$ be the vector bundle $S^2 \E \otimes \sO_{\P^3}(1)$ on $\P^3$, where $\E$ is $3\sO_{\P^3} \oplus \sO_{\P^3}(-1)$. Let $\T$ be the linear system of sections of $\F$ defined by the choice of the second matrix in (\ref{Eqn:matrixform}). Then a generic section of the linear system $\T$ defines a smooth cubic fivefold and the degeneracy locus $D_2$ of the morphism corresponding to this section is of codimension $3$ in $\P^3$. Moreover, $D_2$ is composed of a finite number of ordinary double points and there is a bijection between $D_2 \cap \{x_3 = 0\}$ and the nodes of the sextic curve $C$.
\end{prop}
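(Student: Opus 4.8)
The plan is to study the symmetric matrix $\bar M = M + x_3\bar M_1$ of (\ref{Eqn:matrixform}), where $M$ is the matrix rigidly attached to the given nodal cubic fourfold $Y$ and the variable part $x_3\bar M_1$ runs over $\T$ as $\bar F$ (equivalently $\bar M_1$) varies. The essential difficulty is that $\T$ is a \emph{proper} sub-system of the full space of symmetric morphisms $\E \to \E^\star \otimes \sL$, so Lemma \ref{Lemma:SymmetricCodimension} cannot be invoked directly. I would therefore stratify $\P^3$ by the hyperplane $\{x_3=0\}=B$ and its complement, and argue on the two pieces separately, collecting at the end the (open, dense) genericity conditions and intersecting them.

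First I would establish smoothness of $\bar Y$. The base locus of the linear system $\{F + x_3\bar F = 0\}$ on $\P^6$, as $\bar F$ ranges over all quadrics, is exactly $\{x_3 = F = 0\}$, which is the smooth fourfold $Y$; by Bertini a general member $\bar Y$ is smooth away from it. Along $\{x_3=0\}$ smoothness is automatic: at a point $p$ with $x_3(p)=0$ the $x_i,y_k$-components of $\nabla(F + x_3\bar F)(p)$ equal those of $\nabla F(p)$ and the $x_3$-component equals $\bar F(p)$, so since $Y$ is smooth one has $\nabla F(p)\neq 0$ and hence $\bar Y$ is smooth at $p$ for \emph{every} $\bar F$. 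Thus a generic section of $\T$ gives a smooth cubic fivefold.

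Next I would treat the open set $\{x_3\neq 0\}$. At every such point $p$ the value $\bar M(p) = M(p) + x_3(p)\bar M_1(p)$ ranges over all symmetric $4\times 4$ matrices as $\bar F$ varies, because the entries of $\bar M_1$ carry arbitrary coefficients and $x_3(p)\neq 0$. This pointwise surjectivity of evaluation is precisely the input used in the proof of Lemma \ref{Lemma:SymmetricCodimension}, so the incidence-variety and generic-smoothness argument there applies verbatim over $\{x_3\neq 0\}$. I conclude that for generic $\bar F$: the locus $D_1$ is empty (its expected codimension $\binom{4}{2}=6$ exceeds $3$), $D_2\cap\{x_3\neq 0\}$ has codimension $3$ and is therefore a finite reduced set of points, and $\sing(D_3)\subseteq D_2$. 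The standard corank-$2$ local normal form of a symmetric determinant—where $\det\bar M$ equals, up to a unit, the determinant $\alpha\gamma - \beta^2$ of the reduced $2\times 2$ block—then shows these points are ordinary double points of the sextic surface $\bar C = D_3$, and that $D_2 = \sing(\bar C)$ there.

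Finally, on $\{x_3=0\}=B$ one has $\bar M = M$, so $D_2\cap\{x_3=0\} = \{\rk M \le 2\} = B_2$, which by the nodal hypothesis (Remark \ref{Rmk:FormSingular}) is exactly the finite set of nodes of $C$; this is the asserted bijection and shows finiteness there as well. It remains to verify that $\bar C$ acquires an ordinary double point at each such node $p_0$. Writing $\bar M$ in block form adapted to the $2$-dimensional kernel at $p_0$, the leading quadratic term of $\det\bar M$ in local coordinates $(u_1,u_2,x_3)$ has its $(u_1,u_2)$-block equal to the \emph{nondegenerate} rank-$2$ form coming from the node of $C$; since $\bar M_1(p_0)$ is an arbitrary symmetric matrix, the derivative $\partial_{x_3}\tilde R(p_0)$ can be prescribed freely, and because the $(u_1,u_2)$-block is already invertible, a generic choice makes the full $3\times 3$ quadratic form nondegenerate, i.e.\ yields an ordinary double point of $\bar C$. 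Intersecting all the above open dense conditions (smoothness of $\bar Y$, emptiness of $D_1$, the corank-$2$ normal form off $\{x_3=0\}$, and transversality at the finitely many nodes) produces a common generic $\bar F$ proving the statement. I expect the main obstacle to be this last paragraph: over $\{x_3=0\}$ the matrix is frozen to $M$, so the only available deformation is the single transverse coefficient $x_3$, and one must check that this one-parameter freedom suffices to promote the binary node of $C$ to a ternary ordinary double point of $\bar C$—which works precisely because the node of $C$ is nondegenerate and $\bar M_1(p_0)$ is a free symmetric matrix.
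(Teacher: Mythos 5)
Your proof is correct, but it reaches the two delicate claims --- that every point of $D_2$ is an ordinary double point of $\bar{C}$, and the bijection with the nodes of $C$ --- by a genuinely different route from the paper. The paper argues geometrically: for $x \in D_2$ it takes a hyperplane $H \subset \P^6$ containing the plane $A$ and $x$, observes that the fibre over $x$ of the quadric fibration of the cubic fourfold $Y_H = \bar{Y} \cap H$ coincides with the fibre of $\bar{Y}$'s fibration, and then invokes Proposition 1.2 of \cite{BPrym}: smoothness forces that fibre to be the union of two distinct planes, and a rank-two fibre is equivalent to an ordinary double point of the discriminant, simultaneously for $C$ and for $\bar{C}$; the bijection then follows from this common fibre-wise characterization. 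You never leave the symmetric matrices: you record that $D_1=\emptyset$ generically, use the Schur-complement normal form $\det\bar{M} = (\mathrm{unit})\cdot(\alpha\gamma-\beta^2)$ at corank-two points off $\{x_3=0\}$, identify $D_2\cap\{x_3=0\} = B_2 = \sing(C)$ directly from $\bar{M}|_{x_3=0}=M$ rather than via the fibres, and at these frozen points verify by hand that the single transverse direction $x_3$, together with the arbitrariness of $\bar{M}_1(p_0)$, restores nondegeneracy of the ternary quadratic form. Your route is more self-contained (no appeal to \cite{BPrym} beyond the nodal hypothesis on $C$), it isolates exactly the one non-generic feature of the linear system --- the matrix being frozen to $M$ along $\{x_3=0\}$ --- and your Bertini-plus-base-locus argument actually proves the smoothness of $\bar{Y}$, which the paper only asserts; the paper's route is shorter and yields the geometric description of the singular fibres that is reused later, e.g.\ in Lemma \ref{Lemma:FiberProductResolution}. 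Two compressed steps in your write-up should be made explicit: (i) off $\{x_3=0\}$, the normal form gives an ordinary double point only in combination with transversality of the section to the corank-two stratum, which does follow from the regular-value (generic smoothness) step inside the proof of Lemma \ref{Lemma:SymmetricCodimension}, but deserves a sentence; (ii) at a node $p_0$, ``invertible $(u_1,u_2)$-block plus generic border'' must be checked against the constrained family actually available --- it works because the $x_3$-derivative at $p_0$ of the reduced $2\times 2$ Schur-complement block equals the corresponding block of $\bar{M}_1(p_0)$ (all correction terms vanish since the off-diagonal block of $\bar{M}(p_0)$ is zero in the adapted basis), so the degeneracy condition is a non-trivial polynomial in the free entries and fails generically.
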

\begin{proof}
The generic section of $\T$ defines a smooth fivefold. Notice that the expected codimension of the $2-$degeneracy locus of a section of $\F$ is $\binom{4-2+1}{2}=3$. 
Let $x$ be a point in $\P^3 \smallsetminus \{x_3 = 0\}$. The stalk $\F_x$ is globally generated by the sections in $\T$ because $\bar{q}_k$ contains the monomial $\alpha_k x_3$ and $f$ contains the monomial $\beta x_3^2$, $\alpha_k$ and $\beta$ in $\C$. The matrix of complex numbers obtained after evaluating the second matrix in (\ref{Eqn:matrixform}) in the stalk $\F_x$ is generic, since $\bar{l}_{ij}, \alpha_k$ and $\beta$ are arbitrarily chosen. This proves that $S^2 \F \otimes \sO(3)$ is globally generated by sections of $\T$ where $x_3\neq 0$.
Then, it is possible to apply Lemma \ref{Lemma:SymmetricCodimension} to the restriction of the bundles to $\P^3 \smallsetminus \{x_3 = 0\}$.
It remains to prove that $D_2 \cap \{x_3=0\}$ has codimension three, but this is a direct consequence of the setting of the problem, in which the sextic curve, which coincides with $D_2 \cap \{x_3=0\}$, has at most singularities of codimension $2$ in $\P^2$.

Since $D_2$ is of dimension $0$ in $\P^3$ then it must be finite. Let $x$ be a point in $D_2$. Proving that $x$ is an ordinary double point of $\bar{C}$ is the same as proving that the fibre over $x$ is composed of $A$ and the singular quadric given by two intersecting planes. Consider the hyperplane section of $\P^6$ given by a hyperplane $H$ containing $A$ and $x$. This gives rise to a cubic fourfold $Y_H$ containing $A$ with a projection to the plane $B_H := \bar{B} \cap H$ still containing the point $x$. By construction the fibre over $x$ is the same, and then the result follows by Proposition $1.2$ of \cite{BPrym}. The same proposition ensures that also the curve $C$ has only nodes. In both cases, each one is determined by the fact that the quadric fibration has a fibre that consists of the union of two planes, and this is sufficient to conclude the proof of the last part of the proposition.
\end{proof}

\begin{cor} \label{cor:pullbackb0}
At the level of noncommutative varieties one has
$$(\P^2,\B_0) = (\P^3,\bar{\B_0}) \times_{\bar{B}} B,$$
where $\bar{\B_0}$ is the even Clifford algebra of the quadratic form over $\P^3$.
\end{cor}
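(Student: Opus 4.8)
The plan is to realise $(\P^2,\B_0)$ as the fibre product prescribed by Proposition \ref{prop:pullbackazumvar}, the only genuine content being the compatibility of the even Clifford algebra with restriction to the hyperplane $\{x_3=0\}$. Write $(\bar{\E},\bar{q},\bar{\sL})$ for the quadratic form on $\bar{B}=\P^3$, with $\bar{\E}=3\sO_{\P^3}\oplus\sO_{\P^3}(-1)$, $\bar{\sL}=\sO_{\P^3}(1)$ and $\bar{q}$ the form attached to the matrix (\ref{Eqn:matrixform}), so that $\bar{\B_0}$ is its even Clifford algebra; and write $(\E,q,\sL)$ for the form on $B=\P^2$ of Section \ref{Sec:geometriccontext}, with even Clifford algebra $\B_0$. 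Let $g_0\colon B=\{x_3=0\}\hookrightarrow\bar{B}$ be the closed embedding. At the level of varieties the base change of $\id_{\P^3}$ along $g_0$ is simply $\P^3\times_{\bar{B}}B\cong\P^2$, with projections $p_0=g_0$ and $q_0=\id_B$.

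First I would set up the two morphisms of noncommutative varieties over $(\bar{B},\bar{\B_0})$: the identity $f\colon(\P^3,\bar{\B_0})\to(\bar{B},\bar{\B_0})$, which is strict, and the closed embedding $g\colon(B,g_0^*\bar{\B_0})\to(\bar{B},\bar{\B_0})$ induced by $g_0$, which is strict by construction. Since $f$ is strict, Proposition \ref{prop:pullbackazumvar} identifies the fibre product with $(\P^3\times_{\bar{B}}B,\;q_0^*(g_0^*\bar{\B_0}))=(\P^2,g_0^*\bar{\B_0})$, because $q_0$ is the identity. One could equally invoke the strictness of $g$ to get $(\P^2,p_0^*\bar{\B_0})$, with the same outcome since $p_0=g_0$. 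Thus the corollary reduces to the identification $g_0^*\bar{\B_0}\cong\B_0$ of sheaves of algebras on $\P^2$.

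The heart of the matter is therefore this last identification. Restricting the defining data along $g_0$ gives $g_0^*\bar{\E}=3\sO_{\P^2}\oplus\sO_{\P^2}(-1)=\E$ and $g_0^*\bar{\sL}=\sO_{\P^2}(1)=\sL$; moreover, setting $x_3=0$ turns the matrix (\ref{Eqn:matrixform}) into the matrix $M$ of Section \ref{Sec:geometriccontext}, so that $g_0^*\bar{q}=q$ and the pulled-back quadratic form is exactly $(\E,q,\sL)$. The even Clifford algebra is built from the tensor algebra $T(\bar{\E}\otimes\bar{\E}\otimes\bar{\sL}^*)$ by quotienting by the ideals $J_1,J_2$; since pullback along $g_0$ is right exact and commutes with tensor (and exterior) powers, it carries this presentation to the corresponding presentation for $(\E,q,\sL)$, the generators of $J_1,J_2$ for $\bar{q}$ mapping to those for $q$. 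Hence $g_0^*\bar{\B_0}\cong\B_0$, a conclusion that can also be read off the $\sO_B$-module description $\B_0\cong\sO_B\oplus(\wedge^2\E\otimes\sL^*)\oplus\cdots$, whose multiplication is determined by the restricted form. I expect this base-change step to be the main obstacle, since one must check the agreement not merely as $\sO_{\P^2}$-modules but as sheaves of algebras; once it is in place, combining it with the previous paragraph proves $(\P^2,\B_0)=(\P^3,\bar{\B_0})\times_{\bar{B}}B$.
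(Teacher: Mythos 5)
Your proof is correct and follows essentially the same route as the paper: the paper's own proof likewise observes that the quadric fibration of $Y$ is the base change along the embedding $i\colon B\hookrightarrow\bar{B}$ of the one for $\bar{Y}$, and concludes that $\B_0$ is the pullback $i^*\bar{\B_0}$. The only difference is that you spell out the two steps the paper leaves implicit --- the identification of the noncommutative fibre product via Proposition \ref{prop:pullbackazumvar}, and the compatibility of the even Clifford algebra with pullback via its tensor-algebra presentation --- which the paper treats as immediate (compare the appeal to \cite[Lemma 3.2]{QUADRIC} in the proof of Theorem \ref{teor:extension}).
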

\begin{proof}
By Proposition \ref{prop:LocusCodThree}, the quadric fibration for the cubic fourfold $Y$ is obtained from the one of the cubic fivefold $\bar{Y}$ by base change with respect to the embedding $B \xrightarrow{i} \bar{B}$, where $B$ is the plane $\{x_3=0\}$ in $\P^3$. It follows that $\B_0$ is the pullback along $i$ of $\bar{\B_0}$, and this concludes the proof.
\end{proof}

Let $Y$ be a cubic fourfold containing a plane whose associated sextic curve $C$ is nodal. Let $S$ be the (singular) double cover of $B$ ramified along $C$ and $S^+$ be the minimal resolution of the singularities of $S$. Let $\B_0$ be the Clifford algebra on $B$ of the quadric fibration associated to $Y$.

\begin{prop} \label{prop:final}
There exists an Azumaya algebra $\A$ defined on $S^+$ such that the pushforward to $B$ is isomorphic to $\B_0$.
\end{prop}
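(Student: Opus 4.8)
The plan is to reduce the problem on the singular surface $S$ to a better-behaved problem on the cubic fivefold $\bar{Y}$ over $\bar{B}=\P^3$, where the second degeneration locus is as small as possible, and then to descend the resulting Azumaya algebra back to $S^+$ by base change. First I would invoke Proposition \ref{prop:LocusCodThree}: after choosing a generic section of the linear system $\T$, the associated cubic fivefold $\bar{Y}$ is smooth and the degeneracy locus $D_2=\bar{B}_2$ is a finite set of ordinary double points of the sextic surface $\bar{C}$, so it has codimension $3$ in $\P^3$. Let $\bar{f}:\bar{S}\to \P^3$ be the double cover branched along $\bar{C}$; its singular locus is exactly $\bar{f}^{-1}(D_2)$, a finite set of isolated threefold ODPs. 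Applying Proposition \ref{Prop:AzumayaAlgebraEvenFibration} to the even-dimensional quadratic form over $\P^3$ yields a sheaf of algebras $\tilde{\bar{\B_0}}$ on $\bar{S}$ with $\bar{f}_*\tilde{\bar{\B_0}}\cong \bar{\B_0}$, which is Azumaya on the complement of the codimension-$3$ locus $\bar{f}^{-1}(D_2)$.

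The heart of the argument, and the step where the reduction to isolated nodes is indispensable, is to produce a genuine Azumaya algebra $\bar{\A}$ on a resolution $\bar{S}^+\to \bar{S}$ with $\bar{f}_*\bar{\A}\cong \bar{\B_0}$. Following the technique of \cite{K2}, I would take a (small) resolution whose exceptional locus over each threefold node has codimension $2$ in $\bar{S}^+$, so that $\tilde{\bar{\B_0}}$, already Azumaya on $\bar{S}\smallsetminus \bar{f}^{-1}(D_2)$, defines a Brauer class on the complement of a codimension-$2$ subset of the smooth threefold $\bar{S}^+$. Purity of the Brauer group (equivalently, passing to the reflexive hull of the algebra) then extends this class across the exceptional locus to an Azumaya algebra $\bar{\A}$; since $\bar{\B_0}$ is reflexive on the smooth base $\P^3$ and agrees with $\bar{f}_*\bar{\A}$ away from codimension $2$, the identity $\bar{f}_*\bar{\A}\cong \bar{\B_0}$ persists. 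This is the main obstacle: the codimension-$3$ statement of Proposition \ref{prop:LocusCodThree} is precisely what makes the bad locus codimension $\geq 2$ after resolving, so that the Azumaya (not merely reflexive) property survives, an extension that would fail for $S$ directly, where the exceptional curves are divisorial. One must also verify that the extended algebra is locally free and that its restriction to the resolved quadric fibres recovers the endomorphism algebra of the spinor bundle, which is where the precise geometry of the degeneration enters.

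Finally I would descend to the plane. Writing $i:B=\{x_3=0\}\hookrightarrow \P^3$ for the hyperplane inclusion, Corollary \ref{cor:pullbackb0} says exactly that $i$ is a strict morphism of noncommutative varieties, $\B_0\cong i^*\bar{\B_0}$. By Proposition \ref{prop:pullbackazumvar} the fibre product of $(\bar{S}^+,\bar{\A})$ with $(B,\B_0)$ over $(\P^3,\bar{\B_0})$ is $(\bar{S}^+\times_{\P^3}B,\ \bar{\A}|_{\bar{S}^+\times_{\P^3}B})$, and a local computation at each node shows that $\bar{S}^+\times_{\P^3}B$ is smooth and coincides with the minimal resolution $S^+$ of $S$: the proper transform of $S$ acquires exactly the exceptional $(-2)$-curves lying over the nodes of $C$. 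Setting $\A:=\bar{\A}|_{S^+}$, the algebra $\A$ is Azumaya because the pullback of an Azumaya algebra is Azumaya, see \cite{KNUS} III.5.1. Since $B$ is a Cartier divisor, hence a local complete intersection, and $\bar{S}^+$, $\P^3$ are smooth of matching codimension, Lemma \ref{Lemma:ExactCartesian} shows the base change square is exact cartesian, whence $f_*\A\cong i^*\bar{f}_*\bar{\A}\cong i^*\bar{\B_0}\cong \B_0$, where $f:S^+\to B$ is the induced double cover. This produces the desired Azumaya algebra and completes the proof.
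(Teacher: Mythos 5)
Your outer architecture coincides with the paper's own proof: reduce to the cubic fivefold via Proposition \ref{prop:LocusCodThree}, produce an Azumaya algebra on the resolution $X^+$ (your $\bar{S}^+$) of the double cover $X$ of $\P^3$, identify $S^+$ with the fibre product of $X^+$ and the plane, and push forward using Corollary \ref{cor:pullbackb0}, Proposition \ref{prop:pullbackazumvar} and Lemma \ref{Lemma:ExactCartesian}. Those steps are sound and are exactly what the paper does (its Lemmas \ref{Lemma:FiberProductBranch} and \ref{Lemma:FiberProductResolution} carry out your ``local computation at each node''). The genuine gap is in the step you yourself call the heart of the argument: producing the Azumaya algebra on $X^+$ whose pushforward to $\P^3$ is $\bar{\B_0}$. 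Purity of the Brauer group does not do this. Purity extends the Brauer \emph{class} of the algebra from $X^+ \smallsetminus E$ ($E$ the exceptional locus) across the codimension-two locus $E$; it produces some Azumaya algebra on $X^+$ whose restriction to $X^+ \smallsetminus E$ is only Morita equivalent to the given one. The identity $(f\circ\sigma^+)_*\B^+ \cong \bar{\B_0}$ is a statement about a specific algebra, not about a class, so purity alone cannot yield it.

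Your parenthetical repair --- ``equivalently, passing to the reflexive hull of the algebra'' --- is not an equivalence, and this is where the argument breaks. On a smooth \emph{surface} every reflexive sheaf is locally free, so the reflexive hull of an Azumaya algebra across finitely many points is again Azumaya; on the smooth \emph{threefold} $X^+$ this fails in general. Concretely: let $Z$ be a smooth threefold, $U$ the complement of a point, and $V_U$ a rank-two bundle on $U$ whose reflexive extension to $Z$ is not locally free (e.g.\ the syzygy bundle of the maximal ideal on $\C^3$). Then $\eend(V_U)$ is Azumaya on $U$ with trivial Brauer class, yet no Azumaya algebra on $Z$ restricts to it: such an algebra would have the form $\eend(W)$ with $W|_U \cong V_U \otimes L$, and since line bundles extend across codimension two on a smooth variety, $V_U$ itself would extend as a bundle, a contradiction. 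So the assertion that the reflexive hull of the pullback of $\tilde{\B}_0$ is locally free and Azumaya is precisely the statement that has to be proven --- you even flag it (``one must also verify that the extended algebra is locally free'') without giving an argument. That verification is the nontrivial content the paper imports from Kuznetsov: the flip $M^+$ of the Fano scheme of lines of the quadric fibration is a $\P^1$-bundle over $X^+$ (\cite[Proposition 5.5]{K2}), its associated rank-four Azumaya algebra $\B^+$ is the desired extension, and \cite[Lemma 5.7]{K2} gives $(f \circ \sigma^+)_*\B^+ \cong \bar{\B_0}$. Without this geometric construction (or a substitute for it), your middle step is an assertion rather than a proof; with it, your argument becomes the paper's.
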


\begin{proof}
Proposition \ref{prop:LocusCodThree} guarantees the existence of a cubic fivefold $\bar{Y} \subset \P^6$ such that $Y$ is contained in $\bar{Y}$ and the degeneracy locus of the associated quadric fibration is a sextic surface $\bar{C}$ in $\P^3$ with at most ordinary double points. Taking a specific hyperplane section of $\P^6$ gives back the original picture of $Y \subset \P^5$. The quadric fibration on $\P^3$ arising from $\bar{Y}$ satisfies the hypothesis of \cite[Theorem 1.1]{K2}. In particular the construction of sections $4$ and $5$ of \cite{K2} can be carried out to obtain the following diagram
\begin{equation} \label{Eqn:NoPullback}
	\xymatrix{
 S^+\ar[d] & X^+\ar[d] \\
 S\ar[d] & X\ar[d] \\
 \P^2\ar@{^{(}->}[r]  & \P^3 
}
\end{equation}
Here $S$ and $X$ are respectively the double covering of $\P^2$ ramified over the sextic curve $C$ and of $\P^3$ ramified over the sextic surface $\bar{C}$ induced from the centres of the corresponding Clifford algebras. The spaces $S^+$ and $X^+$ are resolutions of the singularities of $S$ and $X$. The structure of the exceptional locus over a singular point of $X$ is described in \cite[Proposition 4.4]{K2} and is isomorphic to $\P^1$. Notice that the projective space $\P^3$ coincides with the smooth base $Y$ in the notation of \cite{K2}.
Lemma \ref{Lemma:FiberProductBranch} and Lemma \ref{Lemma:FiberProductResolution} located below, ensure that the two vertical maps of the diagram (\ref{Eqn:NoPullback}) are actually fibre products, hence the diagram can be completed as follows
\begin{equation}  \label{Eqn:Pullback}
	\xymatrix{
& S^+\ar[d]^{\tau^+}\ar@{^{(}->}[r]^g \pullbackcorner &  X^+\ar[d]^{\sigma^+}\ar@{.}[r] & \B^+\\ 
\tilde{\B_0}\ar@{.}[r] & S\ar[d]^{\pi}\ar@{^{(}->}[r] \pullbackcorner & X\ar[d]^f &\\
 \B_0\ar@{.}[r] & \P^2\ar@{^{(}->}[r]  & \P^3\ar@{.}[r] & \bar{\B_0}
}
\end{equation}
where $\B_0$ and $\bar{\B_0}$ are the Clifford algebras associated to the quadric fibrations related to $Y$ and $\bar{Y}$ described in the preliminaries and the dotted arrows denote the fact that the algebras are defined over the corresponding schemes. 
Kuznetsov proved that there is a $\P^1$-bundle $M^+ \rightarrow X^+$, where $M^+$ is a flip of the Fano scheme $M$ of lines over the family of quadrics associated to $X$. That is sufficient to obtain an Azumaya algebra $\B^+$ on $X^+$ as done in \cite[Proposition 5.5]{K2}.  By Corollary \ref{cor:pullbackb0} one has $\B_0 \cong i^* \bar{\B_0}$. Moreover, \cite[Lemma 5.7]{K2} ensures that the push forward of $\B^+$ along $(f \circ \sigma^+)$ is isomorphic to $\bar{\B_0}$. 

The two squares in the diagram (\ref{Eqn:Pullback}) can be composed to obtain the following
\begin{equation} \label{Dgr:comm}
	\xymatrix{
 g^*(\B^+)\ar@{.}[r] & S^+\ar@{^{(}->}[r]^{g}\ar[d]^k \pullbackcorner & X^+\ar@{.}[r]\ar[d]^{h} & \B^+\\
 \B_0\ar@{.}[r] & \P^2\ar@{^{(}->}[r]^{i}  & \P^3\ar@{.}[r] & \bar{\B_0}
}
\end{equation}
where $k := \pi \circ \tau^+$ and $h := f \circ \sigma^+$.

The inclusion $i$ is strict, hence Proposition \ref{prop:pullbackazumvar} ensures that square (\ref{Dgr:comm}) gives a base change diagram of noncommutative varieties. Moreover, the pullback $\A := g^*(\B^+)$ is an Azumaya algebra on $S^+$.

It remains to prove that the pushforward $(\pi \circ \tau^+)_*(\A)$ is isomorphic to $\B_0$. Recalling that $\B_0 \cong i^* \bar{\B_0}$, $\bar{\B_0} \cong h_* \B^+$ and $\A := g^* \B^+$, one obtains the following base change problem:
prove that $k_* g^* \B^+$ is equivalent to $i^* h_* \B^+$. This depends on the Square (\ref{Dgr:comm}) being exact cartesian, see Definition \ref{Defn:ExactCartesian}. We can apply Lemma \ref{Lemma:ExactCartesian}, since the inclusion $i$ is strict and closed and the codimensions of $S^+$ and $\P^2$ on $X^+$ are the same, and hence the square (\ref{Dgr:comm}) turns out to be exact cartesian giving that the pushforward $(\pi \circ \tau^+)_*(\A)$ is isomorphic to $\B_0$. which concludes the proof.
\end{proof}

\noindent Since being Azumaya is a local property, the pushforward of $\A$ to $S$, restricted to the complement of $\sing(C)$, is isomorphic to $\tilde{\B}_0$, the Azumaya algebra on $S \smallsetminus \sing(C)$ provided by Proposition \ref{Prop:AzumayaAlgebraEvenFibration}. The following lemmata, used in the proof of Proposition \ref{prop:final}, hold under the same assumptions.

\begin{lemma} \label{Lemma:FiberProductBranch}
The fibre product along an inclusion of a branched covering is again a covering, ramified on the restriction of the ramification locus.
\end{lemma}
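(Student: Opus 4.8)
The plan is to realise a branched covering as the relative spectrum of a sheaf of $\sO$-algebras and to exploit the fact that the formation of the relative spectrum commutes with base change. Concretely, let $f:X\to Z$ be the branched double covering and $D\subset Z$ its ramification locus; I would choose a line bundle $\sL$ on $Z$ together with a section $s\in H^0(Z,\sL^{\otimes 2})$ with $D=\{s=0\}$, so that $X\cong\spec_Z(\Adg)$, where $\Adg=\sO_Z\oplus\sL^{-1}$ is the sheaf of $\sO_Z$-algebras whose multiplication on the second summand is induced by $s$. (In our situation the relevant coverings are the degree-two covers determined by the centres of the Clifford algebras, so the double cover is the case of interest.)

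First I would record the base-change property of the relative spectrum: for the closed embedding $i:W\hookrightarrow Z$ there is a canonical isomorphism $W\times_Z X\cong\spec_W(i^*\Adg)$. This is the standard compatibility of $\underline{\spec}$ with base change, and it identifies the fibre product with the relative spectrum of $i^*\Adg=\sO_W\oplus(i^*\sL)^{-1}$, whose algebra structure is now given by the restricted section $i^*s\in H^0\bigl(W,(i^*\sL)^{\otimes 2}\bigr)$.

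Next I would read off the data of the restricted covering. Since the structure sheaf of the fibre product is $\sO_W\oplus(i^*\sL)^{-1}$, the projection $W\times_Z X\to W$ is again a double covering, and its branch divisor is the zero locus of $i^*s$, namely $i^{-1}(D)=D\cap W$. Thus the new covering is ramified exactly along the restriction of the ramification locus, as claimed; finiteness and flatness are automatically preserved under base change. The same argument applies verbatim to any finite covering written as $\spec_Z(\Adg)$: base change restricts $\Adg$, and the ramification locus, being the vanishing of the discriminant section of the trace form, restricts to the discriminant locus of $i^*\Adg$.

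The only point requiring care is to ensure that the restricted covering does not degenerate, that is, that $i^*s$ does not vanish identically and the branch locus remains a divisor of the expected codimension rather than collapsing the cover to a split one. In the setting of Proposition \ref{prop:LocusCodThree} this is automatic: restricting $\bar{C}$ to $B=\{x_3=0\}$ simply kills the summand carrying $x_3$ in the matrix of (\ref{Eqn:matrixform}), so $i^*s$ cuts out precisely the sextic curve $C=\bar{C}\cap B$, a genuine divisor in $\P^2$. Hence $i^*s\neq 0$ and $W\times_Z X\to W$ is a genuine double covering branched along $C$, which is exactly the restriction of the ramification locus.
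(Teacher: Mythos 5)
Your proof is correct, but it argues differently from the paper. The paper's own proof is a direct verification of the universal property of the fibre product: it takes the covering $\tilde{V}\to V$ ramified over $i^{-1}(C)$, indicates a natural map $\tilde{V}\to\tilde{W}$ over $i$, and checks that any pair of compatible maps $Z\to\tilde{W}$, $Z\to V$ factors through $\tilde{V}$, so that $\tilde{V}\cong V\times_W\tilde{W}$. That argument is short and stays at the level of abstract coverings, but it is sketchy: the map $\tilde{V}\to\tilde{W}$ is only described pointwise via $f^{-1}(i\circ\pi)(p)$, and pinning down the scheme-theoretic (not merely set-theoretic) isomorphism really requires the algebra structure of the cover, which the paper leaves implicit. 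Your route supplies exactly that structure: writing the double cover as $\spec_Z(\Adg)$ with $\Adg=\sO_Z\oplus\sL^{-1}$ and multiplication given by $s\in H^0(Z,\sL^{\otimes 2})$, and invoking the compatibility of the relative spectrum with base change, you compute the fibre product outright as the cyclic cover attached to $(i^*\sL,\,i^*s)$, so the scheme structure, flatness, and the identification of the branch locus as $\{i^*s=0\}=i^{-1}(D)$ all come for free. Two further merits of your approach: it makes visible the hypothesis that the lemma's loose statement hides, namely that $W\not\subset D$ (otherwise $i^*s\equiv 0$ and the restricted cover degenerates to a non-reduced scheme), and you correctly check this in the paper's application, where the restriction of $\bar{C}$ to $\{x_3=0\}$ is the genuine sextic $C$; and since $\pic(\P^2)$ has no $2$-torsion, your cyclic cover is unambiguously the cover $S$ determined by the centre of the Clifford algebra, so the identification needed in Proposition \ref{prop:final} holds. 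In short, the paper's proof buys brevity at the cost of rigor; yours is longer but closes the gaps.
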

\begin{proof}
One has to prove that, if $V \subset W$ and $\tilde{W}$ is the covering of $W$ ramified over $C$, then the fibre product
\begin{equation*}
	\xymatrix{
V \times_W \tilde{W} \ar[r]\ar[d] & \tilde{W}\ar[d]^f\\
V\ar@{^{(}->}[r]^i & W
}
\end{equation*}
is isomorphic to the covering $\tilde{V} \xrightarrow{\pi} V$ of $V$ ramified over $i^{-1}(C)$. This can be proved by using the definition of the fibre product. If $p \in \tilde{V}$ one can consider $f^{-1}(i \circ \pi)(p)$ and this gives a map to $\tilde{W}$. Moreover, if $Z \xrightarrow{a} \tilde{W}$ and  $Z \xrightarrow{b} V$ are such that $f \circ a = i \circ b$, then one obtains a map $Z \rightarrow \tilde{V}$ that makes the diagram commute, giving the isomorphism between $\tilde{V}$ and $V \times_W \tilde{W}$.
\end{proof}

\begin{lemma} \label{Lemma:FiberProductResolution}
The fibre product of $X^+ \xrightarrow{\sigma^+} X$ along the inclusion $S \rightarrow X$ is the minimal resolution of singularities of $S$.
\end{lemma}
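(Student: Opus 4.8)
The plan is to reduce the statement to a purely local computation at the nodes of $C$, where I expect to recognize the standard resolution of an $A_1$ surface singularity. Write $\sigma^+\colon X^+\to X$ for the resolution and $S^\sharp:=S\times_X X^+=(\sigma^+)^{-1}(S)$ for the fibre product. Since $S=f^{-1}(\P^2)$ is the preimage of the hyperplane $\{x_3=0\}\subset\P^3$ under the finite flat double cover $f$, it is a Cartier divisor on $X$; as $\sigma^+$ is small (by \cite[Proposition 4.4]{K2} its exceptional locus is a disjoint union of curves isomorphic to $\P^1$, one over each node of $\bar C$, hence of codimension two), the Cartier pullback $(\sigma^+)^*S$ has no divisorial component along the exceptional locus, so $S^\sharp$ coincides with the strict transform of $S$. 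In particular $S^\sharp$ is irreducible and $\sigma^+|_{S^\sharp}\colon S^\sharp\to S$ is proper and birational; it is moreover an isomorphism over $S\smallsetminus\sing(S)$, since $\sigma^+$ is an isomorphism away from $\sing(X)$ and $\sing(X)\cap S$ consists exactly of the points lying over the nodes of $\bar C$ on $\{x_3=0\}$, which by Proposition \ref{prop:LocusCodThree} are precisely the nodes of $C$, that is the singular points of $S$.

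First I would set up convenient analytic coordinates at a node. Because $C$ is nodal, a local equation $g$ of $\bar C$ has a nondegenerate quadratic part, while its restriction $g|_{\{x_3=0\}}$ has rank-two quadratic part; a linear change preserving $\{x_3=0\}$ followed by a parametrized Morse lemma (applied to the family $g(\,\cdot\,,\,\cdot\,,w)$ with parameter $w=x_3$) then yields coordinates $(t,u,v,w)$ in which simultaneously $X=\{t^2=u^2+v^2+w^2\}$ and $\P^2=\{w=0\}$. Setting $a=t-w,\ b=t+w,\ c=u+iv,\ d=u-iv$ puts the node in the standard form $\{ab=cd\}$, and the slice $S=X\cap\{w=0\}=\{a=b\}$ becomes the $A_1$ surface singularity $\{a^2=cd\}$.

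Next I would compute $S^\sharp$ in the two charts of the small resolution
$$X^+=\{\,((a,b,c,d),[\lambda:\mu])\ :\ a\lambda+c\mu=0,\ d\lambda+b\mu=0\,\}\subset X\times\P^1,$$
which is smooth with exceptional fibre $\P^1$ over the node. Imposing $a=b$ in each chart exhibits $S^\sharp$ as a copy of $\Ab^2$ in both charts, so $S^\sharp$ is smooth, and the fibre of $S^\sharp\to S$ over the node is the whole exceptional $\P^1$. The symmetric choice of the other small resolution gives the same outcome, so the conclusion is independent of which of the two small resolutions Kuznetsov's construction produces.

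Finally I would conclude by minimality. The morphism $S^\sharp\to S$ is a resolution whose exceptional locus over each $A_1$ point is a single smooth rational curve; since the minimal resolution of an $A_1$ singularity carries exactly one exceptional $(-2)$-curve and every resolution factors through it via a composition of blow-ups (each creating a further exceptional component, which would make the fibre reducible), the induced birational morphism $S^\sharp\to S^+$ of smooth surfaces must be an isomorphism. Hence $S\times_X X^+=S^\sharp\cong S^+$. I expect the main obstacle to be the local step: choosing coordinates that put the threefold node in standard form while keeping $\{x_3=0\}$ linear, so that $S$ is visibly the standard $A_1$ singularity and its strict transform can be read off the charts. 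This is precisely where the hypothesis that $C$ is nodal (equivalently $B_2=\sing(B_1)$, the fibres over the nodes being pairs of planes, see \cite{BPrym}) enters the argument.
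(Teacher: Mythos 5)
Your proof is correct, and it shares the paper's geometric skeleton: $S$ is a Cartier divisor in $X$, the points of $\sing(X)$ lying on $S$ are threefold ordinary double points, $\sigma^+$ is small with exceptional $\P^1$'s over them, and nodality of $C$ (a node of $X$ remains a node on $S$, by Proposition \ref{prop:LocusCodThree}) makes the hyperplane section an $A_1$ singularity at each such point. The execution, however, is genuinely different. The paper argues qualitatively: it invokes the description of $X^+$ as the blow-up of one of the two planes of the tangent cone at each node, reduces smoothness of $X^+\times_X S$ to the claim that the contracted $\P^1$ is not contained in the hyperplane $\{x_3=0\}$, and leaves minimality essentially implicit. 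You instead carry out the local-analytic computation: a parametrized Morse lemma puts the germ in the normal form $X=\{ab=cd\}$, $S=\{a=b\}$ --- this is exactly where the two rank conditions (node of $\bar{C}$, node of $C$) enter, through the nonvanishing Schur complement needed to absorb the critical value into $w^2$ --- the two charts of the standard small resolution exhibit the preimage as $\Ab^2$ with a single irreducible exceptional $\P^1$, and minimality follows by factorization through $S^+$. What your route buys is rigor precisely where the paper is terse: smoothness of the fibre product and minimality of the resulting resolution are proved rather than asserted. Two points to tighten: (i) you implicitly assume that Kuznetsov's $X^+$ is, analytically locally at each node, one of the two standard small resolutions of the ordinary double point; this is Atiyah's classification, or can be extracted from the paper's own description of $X^+$ as the blow-up of one of the two planes through each node, and your remark that both choices give the same answer then closes the issue; (ii) your preliminary identification of $S^\sharp$ with the strict transform via smallness is loose as stated, since a scheme-theoretic preimage of a Cartier divisor could a priori carry embedded structure along the exceptional curve, but your chart computation supersedes it, so it should be presented as a consequence of that computation rather than as an a priori reduction.
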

\begin{proof}
By construction, the inclusion sends singular points of $S$ to singular points of $X$. By the description of the exceptional locus of $X^+$, the pullback map to $S$ is an isomorphism on the smooth points of $S$ and is a contraction of a $\P^1$ on the singular points. The singularities of $X$, in particular the ones in common with $S$, are cones on singular quadrics degenerated to two planes. A small resolution $X^+$, which exists by the work of Kuznetsov, is hence obtained by blowing up one of these planes for each singular point. Since the sextic curve was obtained by taking an hyperplane section of the sextic surface, it follows that $S$ is a Cartier divisor inside $X$. It remains to check that the $\P^1$ contracted by the resolution is not all contained in the hyperplane $\{x_3=0\}$. That is implied by the last part of Proposition \ref{prop:LocusCodThree}, since a node on $X$ remains a node on $S$, hence the fibre product $X^+ \times_X S$ is smooth. This implies that the restriction of the resolution of singularities on $X$ coincides with the resolution of the singularities on $S$.
\end{proof}

\noindent It is now possible to give a proof for Theorem \ref{teor:extension}. The notation is the same used in this whole section. Notice that it is also possible to prove directly the equivalence $\T_Y \cong \der^b(S^+,\A)$ by using the result of Proposition \ref{prop:final} and then retracing the proof of \cite[Theorem 1.1]{K2}.

\begin{proof}[Proof of Theorem \ref{teor:extension}]
The theorem can be proved by a sequence of equivalences; recall first the one given by Proposition \ref{prop:stepone}:
\begin{equation} \label{EStep1}
\T_Y \cong \der^b(B,\B_0)\text{.}
\end{equation}
Corollary \ref{cor:pullbackb0} gives the following isomorphism of noncommutative varieties:
$$(\P^2,\B_0) = (\P^3,\bar{\B_0}) \times_{\bar{B}} B\text{.}$$
As observed in the proof of \cite[Lemma 3.2]{QUADRIC}, the base change of such algebras can be described in terms of quadric fibrations. At the level of derived categories this gives
\begin{equation} \label{EStep2}
\der^b(B,\B_0) \cong \der^b(B,i^*\bar{\B_0}) \cong \der^b\left((\bar{B},\bar{\B_0}) \times_{\bar{B}} B\right),
\end{equation}
that is the base change of $\der^b(\bar{B},\bar{\B_0})$ to $\P^2$. 

Then, as in the proof of Theorem 1.1 in \cite{K2}, there is an equivalence between the derived category of $\bar{B}$ twisted by $\bar{\B_0}$ and the derived category of $X^+$ twisted by the Azumaya algebra $\B^+$.
\begin{equation} \label{EStep3}
\der^b(\bar{B},\bar{\B_0}) \cong \der^b(X^+,\B^+)\text{.}
\end{equation}

Finally, as in the proof of Proposition \ref{prop:final}, since the map $i$ in the square (\ref{Dgr:comm}) is strict, it is possible to apply Proposition \ref{prop:pullbackazumvar}, obtaining
$$(S^+,\A) = (X^+,\B^+) \times_{\bar{B}} B\text{.}$$
In the case of Azumaya algebras, the base change can be described in terms of $\P^1$-fibrations, and for them, a semiorthogonal decomposition is provided in \cite{BER}. Hence, as for the equivalence (\ref{EStep2}),
\begin{equation} \label{EStep4}
\der^b(S^+,\A) \cong \der^b(S^+,g^*\B^+) \cong \der^b\left((X^+,\B^+) \times_{\bar{B}} B\right),
\end{equation}

The theorem is proved by combining, in order, the equivalences (\ref{EStep1}), (\ref{EStep2}), (\ref{EStep3}) and (\ref{EStep4}).
\end{proof}


\medskip

{\small\noindent{\bf Acknowledgements.}
During the preparation of this paper, the author was supported by the Department of Mathematics and Natural Sciences of University of Stavanger in the framework of the grant 230986 of the Research Council of Norway.
It is a pleasure to thank Michal Kapustka, Paolo Stellari and Tom Sutherland for their useful suggestions and helpful discussions. The comments of an anonymous referee helped a lot in correcting many mistakes in the first version of this paper. The author is especially grateful to Alexander Kuznetsov for his valuable comments on a preliminary version of this paper.
}

\end{document}